\newtheorem{thm}{Theorem}[section]
\newtheorem{cor}[thm]{Corollary}
\newtheorem{prop}[thm]{Proposition} 
\newtheorem{lem}[thm]{Lemma} 
\newtheorem*{qst*}{Question} 
\theoremstyle{remark}
\newtheorem*{rem*}{Remark}
\newtheorem{rem}[thm]{Remark}
\newtheorem*{example*}{Example}
\renewcommand{\phi}{\varphi} 
\newcommand{\m} {^{-1}}
\newcommand {\cals} {{\mathcal {S}}}   
\newcommand {\N} {{\mathbb {N}}} 
\newcommand {\Z} {{\mathbb {Z}}}
\newcommand {\Q} {{\mathbb {Q}}}
\newcommand{\inc}{\subset}
\newcommand\Or{OR}
\newcommand\vor{VOR}
\begin{document}
 
\title{Rank of mapping tori  and companion matrices}

\author{ Gilbert Levitt and Vassilis Metaftsis}
\date{}

\maketitle

\begin{abstract}
Given $\varphi\in GL(d,\Z)$, it is decidable whether the mapping torus $G=\Z^d\rtimes_\varphi\Z$ has rank 2 or not (i.e.\ whether $G$ may be generated by two elements); when it does, one may classify generating pairs up to Nielsen equivalence. If $\varphi$ has infinite order, the rank of $\Z^d\rtimes_{\varphi^n}\Z$ is at least 3 for all $n$ large enough; equivalently,  $\varphi^n$ is not conjugate to a companion matrix in $GL(d,\Z)$ if $n$ is large. 
\end{abstract}
\setcounter{tocdepth}{2}

\section{Introduction}

The rank of a finitely generated group is the minimum cardinality of a generating set. There are very few families of groups for which one knows how to compute the rank (see \cite{kw} and references therein),  and there exists no  algorithm computing the rank of a word-hyperbolic group \cite{sh}. 

By Grushko's theorem, rank is additive under free product. It does not behave as nicely under direct product, even when one of the factors is $\Z$: the solvable Baumslag-Solitar group $BS(1,2)=\langle a,t\mid tat\m=a^2\rangle$ and the product $BS(1,2)\times \Z$ both  have rank 2.

In this paper we consider semi-direct products $G=A\rtimes_\varphi\Z$ (also known as mapping tori), with the generator of the cyclic group $\Z$ acting on $A$ by some automorphism $\varphi\in Aut(A)$. This was motivated by the remark that, when $A$ is a free group $F_d$ and $\varphi$ has finite order in $Out(F_d)$, then $G$ is a generalized Baumslag-Solitar group and its rank may be computed \cite{le}. But we do not know how to compute the rank when $\varphi$ has infinite order. Abelianizing does not help much, so we ask:

\begin{qst*} Given $\varphi\in GL(d,\Z)$, can one compute the rank of $G=\Z^d\rtimes_\varphi\Z$?
\end{qst*}

We can prove:

\begin{thm} \label{decid} Given $\varphi\in GL(d,\Z)$,   one can decide whether $G=\Z^d\rtimes_\varphi\Z$ has rank 2 or not. 
\end{thm}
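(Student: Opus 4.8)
\emph{Proof proposal.} The plan is to first reduce "rank $2$" to a purely module-theoretic condition on $\varphi$, and then invoke the algorithmic theory of lattices over orders. Note that $G=\Z^d\rtimes_\varphi\Z$ surjects onto $\Z$ with kernel $\Z^d$, so the images of any generating pair of $G$ generate $\Z$; applying a Nielsen transformation (the natural $GL(2,\Z)$-action on generating pairs, which preserves the subgroup generated) we may assume the pair has the form $(t,w)$ with $t$ mapping to a generator of $\Z$ and $w\in\Z^d$. Writing $t=(v,1)$, conjugation by $t$ acts on $\Z^d$ as $\varphi$ no matter what $v$ is. Let $M\inc\Z^d$ be the $\Z[\varphi,\varphi^{-1}]$-submodule generated by $w$; it is $\varphi$-invariant, hence normal in $G$, and in $G/M=(\Z^d/M)\rtimes\Z$ the image of $\langle t,w\rangle$ is the infinite cyclic group $\langle\bar t\rangle$, which meets $\Z^d/M$ trivially. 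So $\langle t,w\rangle\cap\Z^d=M$, and $\langle t,w\rangle=G$ if and only if $M=\Z^d$; conversely $(0,1)$ together with any $\Z[\varphi,\varphi^{-1}]$-module generator of $\Z^d$ do generate $G$. Since $\Z^d$ is a nontrivial normal subgroup, $G$ is never cyclic, so this gives
$$
G\text{ has rank }2\iff\Z^d\text{ is cyclic over }\Z[\varphi,\varphi^{-1}]\iff\varphi\text{ is conjugate in }GL(d,\Z)\text{ to a companion matrix},
$$
the last equivalence because a cyclic generator $w$ yields a $\Z$-basis $w,\varphi w,\dots,\varphi^{d-1}w$ in which $\varphi$ is the companion matrix of its characteristic polynomial, and conversely.

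Next I would peel off the easy obstruction: being cyclic over $\Z[\varphi,\varphi^{-1}]$ forces $\Q^d$ to be cyclic over $\Q[\varphi]$, i.e.\ the minimal and characteristic polynomials of $\varphi$ coincide, which is a finite linear-algebra check over $\Q$; if it fails, the rank is at least $3$. So assume $m_\varphi=\chi_\varphi=:\chi$ has degree $d$. Then $1,\varphi,\dots,\varphi^{d-1}$ are $\Q$-independent, $R:=\Z[\varphi]=\Z[\varphi,\varphi^{-1}]$ is a $\Z$-order in the finite-dimensional commutative $\Q$-algebra $A:=\Q[\varphi]\cong\Q[x]/(\chi)$, and $\Q^d$ is free of rank one over $A$; fixing such an isomorphism identifies $\Z^d$ with a full $R$-lattice $L\inc A$. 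Thus everything comes down to deciding whether $L\cong R$ as $R$-modules (equivalently, whether $L$ is generated over $R$ by a unit of $A$).

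Finally, the decidability of "$L\cong R$". The clean uniform input is the solvability of the conjugacy problem in $GL(d,\Z)$: one simply tests whether $\varphi$ is conjugate to the companion matrix $C_\chi$. More hands-on, at least when $\chi$ is squarefree (so $A$ is \'etale), one can compute a maximal order of $A$ and the conductor of $R$, check that $L$ has multiplier ring exactly $R$ and is invertible (i.e.\ $L\cdot(R:_A L)=R$; if not, $L\not\cong R$), and then decide whether the class of $L$ in the finite, effectively computable Picard group $\mathrm{Pic}(R)$ is trivial. The main obstacle is precisely this last step: there is no a priori bound on a cyclic generator $w$, since conjugating matrices can be arbitrarily large, so no naive search terminates, and one must go through the structure theory of lattices over the (possibly non-maximal) order $R$ — effective class-group computations in the étale case, and the full conjugacy problem of Grunewald to cover uniformly the cases where $A$ has nilpotents. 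The reduction in the first paragraph is the conceptual heart; the number-theoretic decidability, while standard, is the technically heaviest ingredient.
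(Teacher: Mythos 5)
Your proposal is correct and follows essentially the same route as the paper: Nielsen-normalize a generating pair to $(t,w)$ with $w\in\Z^d$, identify ``rank $2$'' with $\Z^d$ being generated by a single $\varphi$-orbit, translate that via Cayley--Hamilton into $\varphi$ being conjugate to its companion matrix in $GL(d,\Z)$, and decide this using Grunewald's solution of the conjugacy problem. The supplementary discussion of orders and Picard groups is a viable alternative for the last step, but the paper simply cites Grunewald as you do in your ``clean uniform'' option.
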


It turns out that the rank of $G$  is 1 plus the minimum number $k$ such that $\Z^d$ may be generated by $k$ orbits of $\varphi$ (i.e.\ there exist $g_1,\dots,g_k\in\Z^d$ such that the elements $\varphi^n(g_i)$, for $n\in\Z$ and $i=1,\dots, k$, generate $\Z^d$). 
In particular, $G$
has rank 2 if and only if $\Z^d$ may be generated by a single $\varphi$-orbit. This happens precisely when
$\varphi$ is conjugate to the companion matrix having the same characteristic polynomial. This may be decided since the conjugacy problem is solvable in $GL(d,\Z)$ \cite{grunewald}. 

Theorem \ref{decid}  extends to the case when $\varphi$ is an automorphism of an arbitrary finitely generated nilpotent group $A$.

When $G$ has rank 2, one can classify generating pairs up to Nielsen equivalence. In particular: 

\begin{thm}Suppose that $G=\Z^d\rtimes_\varphi\Z$ has rank 2.
 There are infinitely many Nielsen classes of generating pairs  if and only if the cyclic subgroup of $GL(d,\Z)$ generated by $\varphi$ has infinite index in its centralizer. 
\end{thm}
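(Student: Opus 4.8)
The plan is to translate everything into commutative algebra over the ring $R:=\Z[x]/(p(x))$, where $p$ is the characteristic polynomial of $\varphi$ and $x$ acts as $\varphi$. Since $G$ has rank $2$, by the discussion above $\varphi$ is conjugate to the companion matrix of $p$, so we may assume $\Z^d=R$ with $\varphi$ equal to multiplication by $x$; note $x\in R^\times$ because $p(0)=\pm1$. Then the centralizer of $\varphi$ in $GL(d,\Z)$ is exactly $R^\times$: an invertible $\Z$-linear endomorphism commuting with $\varphi$ is an automorphism of the free rank-one $R$-module $R$, i.e.\ multiplication by a unit. Under this identification $\langle\varphi\rangle$ becomes $\langle x\rangle\le R^\times$, so the statement to prove is that there are infinitely many Nielsen classes if and only if $R^\times/\langle x\rangle$ is infinite. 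I will in fact produce a bijection between the set of Nielsen classes of generating pairs of $G$ and the quotient $R^\times/\langle -1,x\rangle$; since $[R^\times:\langle -1,x\rangle]$ and $[R^\times:\langle x\rangle]$ differ by a factor $1$ or $2$, this suffices.

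\emph{Normal form and the easy direction.} I identify Nielsen classes of generating pairs with $\mathrm{Aut}(F_2)$-orbits of epimorphisms $\rho\colon F_2\to G$ under precomposition (elementary Nielsen transformations generate $\mathrm{Aut}(F_2)$, and inner automorphisms of $F_2$ induce conjugations in $G$). Composing $\rho$ with $G\to\Z$ gives a primitive element of $\mathrm{Hom}(F_2,\Z)$; as $\mathrm{Out}(F_2)=GL(2,\Z)$ acts transitively on primitive elements, one may precompose $\rho$ so that this composite is the standard map $e_1\mapsto0$, $e_2\mapsto1$. Hence every generating pair is Nielsen equivalent to one of the form $P_{w,b}:=\bigl((w,0),(b,1)\bigr)$ with $w,b\in\Z^d=R$. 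A short computation in $G$ gives $v^nuv^{-n}=(\varphi^n(w),0)$, so $P_{w,b}$ generates $G$ exactly when the $\varphi$-orbit of $w$ generates $\Z^d$, i.e.\ when $w\in R^\times$. Moreover $b$ is irrelevant: the Nielsen moves $v\mapsto u^kv$ and $v\mapsto vu^k$, together with conjugation of the pair by $\Z^d$, change $b$ by an arbitrary element of $\Z w+\Z\varphi(w)+(1-\varphi)\Z^d$, and this is all of $\Z^d$ when $w$ is a unit (reduce mod $(1-\varphi)$: the image of $w$ is a unit, hence an additive generator, of $R/(1-\varphi)R$). Finally, conjugating the pair by $(0,1)\in G$ replaces $P_{w,b}$ by a pair $P_{\varphi(w),b''}$, and $u\mapsto u^{-1}$ replaces it by $P_{-w,b}$; so $P_{w,b}$ and $P_{w',b'}$ are Nielsen equivalent whenever $w'\in\langle -1,x\rangle\,w$.

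\emph{The converse --- the crux.} The main point, and the step I expect to be the real obstacle, is: if $P_{w,b}$ and $P_{w',b'}$ are Nielsen equivalent then $w'=\pm\varphi^k(w)$ for some $k$. Write $\rho'=\rho\circ\alpha$ with $\alpha\in\mathrm{Aut}(F_2)$, where $\rho,\rho'$ are the normalized epimorphisms of the two pairs. Since both are normalized, $\alpha(e_1)\in N:=\ker(F_2\to\Z)$ and $\alpha(e_2)$ maps to $1$. As $\alpha$ is an automorphism and $e_1$ is a free generator, $\alpha(e_1)$ \emph{normally} generates $N$ in $F_2$ (indeed $F_2/\langle\langle\alpha(e_1)\rangle\rangle\cong\Z$ must coincide with $F_2/N$). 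Now $N$ is free on $\{e_2^ne_1e_2^{-n}:n\in\Z\}$, so $N^{\mathrm{ab}}$ is free of rank one over $\Z[t^{\pm1}]=\Z[F_2/N]$; normal generation of $N$ by $\alpha(e_1)$ therefore forces the image of $\alpha(e_1)$ in $N^{\mathrm{ab}}$ to be a unit, i.e.\ $\pm t^k[e_1]$ (using $\Z[t^{\pm1}]^\times=\{\pm t^k\}$). Finally, since $\Z^d$ is abelian, $\rho|_N\colon N\to\Z^d$ factors through $N^{\mathrm{ab}}$, sending $[e_1]$ to $w$ and intertwining $t$ with $\varphi$; hence $w'=\rho(\alpha(e_1))=\pm\varphi^k(w)$. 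Combining the three steps, the Nielsen classes biject with $R^\times/\langle -1,x\rangle$, and the theorem follows. The delicate points are exactly those of this paragraph: that Nielsen equivalence is the $\mathrm{Aut}(F_2)$-action, that a normalization-preserving automorphism must carry $e_1$ to a \emph{normal} generator of $N$, and the module-theoretic fact $N^{\mathrm{ab}}\cong\Z[t^{\pm1}]$, which is what upgrades ``generates $N^{\mathrm{ab}}$'' to ``$=\pm t^k[e_1]$'' and thereby pins $w$ down modulo $\langle -1,x\rangle$.
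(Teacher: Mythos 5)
Your argument is correct, and it reproduces the paper's structure (normalize every generating pair to the form $(a,t)$, characterize when $(a,t)\sim(b,t)$, then count classes against the centralizer of $\varphi$) while executing the two nontrivial steps by different means. The paper handles the hard direction of Proposition \ref{gp} --- that $(a,t)\sim(b,t)$ forces $b=\varphi^n(a^{\pm1})$ --- by invoking Heusener--Weidmann together with the classical fact that a primitive word of $F(a,t)$ with zero $t$-exponent is conjugate to $a^{\pm1}$, and it counts classes via the base-change automorphism $\psi_b$, whose image is the centralizer by Lemma \ref{comp}(2). You replace the first ingredient with a self-contained Alexander-module computation: a normalization-preserving $\alpha\in\mathrm{Aut}(F_2)$ must send $e_1$ to a normal generator of $N=\ker(F_2\to\Z)$, hence to a unit $\pm t^k$ of $N^{\mathrm{ab}}\cong\Z[t^{\pm1}]$ --- this is in effect a proof of the fact the paper cites, in the weaker form (equality in $N^{\mathrm{ab}}$ rather than conjugacy in $F_2$) that suffices here. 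You replace the second ingredient by identifying $(\Z^d,\varphi)$ with $(R,\cdot\,x)$ for $R=\Z[x]/(p)$, so that the centralizer is $\mathrm{End}_R(R)^\times=R^\times$ and the class count is $R^\times/\langle -1,x\rangle$; this is the same fact as Lemma \ref{comp}(2) in ring-theoretic dress, and your bijection with $R^\times/\langle -1,x\rangle$ in fact recovers the paper's exact count (the unnumbered corollary to Proposition \ref{gp}), not merely the finite/infinite dichotomy. What your route buys is independence from the external references \cite{hw} and the primitive-element classification in $F_2$; what the paper's route buys is brevity and the slightly stronger conjugacy statement. The details you were right to flag as delicate --- normal generation of $N$ by $\alpha(e_1)$, the module structure of $N^{\mathrm{ab}}$, and the absorption of the second coordinate $b$ using $\Z w+(1-\varphi)\Z^d=\Z^d$ for $w\in R^\times$ --- all check out.
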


Our next result is motivated by the following theorem due to J.\ Souto:
\begin{thm}[\cite{souto}] Let $A$ be the fundamental group of a closed orientable surface of genus $g\ge 2$. Let $\varphi$ be an automorphism of $A$ representing a pseudo-Anosov mapping class. Then there exists $n_0$ such that  the rank of $G_n=A\rtimes_{\varphi^n}\Z$ is $2g+1$ for all $n\ge n_0$.
\end{thm}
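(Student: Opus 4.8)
I would prove the theorem by showing $\mathrm{rank}(G_n)\le 2g+1$ always and $\mathrm{rank}(G_n)\ge 2g+1$ for $n$ large, the difficulty being entirely in the second inequality. The first is formal and uses nothing about $\varphi$: $A\cong\pi_1(\Sigma_g)$ is $2g$-generated, and adjoining the stable letter $t$ generates $G_n=A\rtimes_{\varphi^n}\Z$. If the second fails for infinitely many $n$ then, since $\mathrm{rank}(G_n)\le 2g+1$ always, we would have $\mathrm{rank}(G_n)\le 2g$ for those $n$; I would derive a contradiction from hyperbolic geometry. By Thurston's hyperbolization of pseudo-Anosov mapping tori, $G_n$ is the fundamental group of a closed hyperbolic $3$-manifold $M_n$, fibering over the circle with fiber $\Sigma_g$ and monodromy $\varphi^n$; since the mapping torus of $\varphi^n$ is an $n$-fold cyclic cover of the mapping torus $M_1$ of $\varphi$ and coverings are local isometries, one gets a uniform lower bound $\mathrm{inj}(M_n)\ge\mathrm{inj}(M_1)=:\eps_0>0$.

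The next step is to extract a geometric limit. Realize a fiber as a pleated surface $S_n\inc M_n$; it is incompressible and of area $4\pi(g-1)$, hence, since $\mathrm{inj}(M_n)\ge\eps_0$, of uniformly bounded diameter. Choosing basepoints $p_n\in S_n$, the balls $B(p_n,R)$ stabilize, for $n$ large, to balls in the infinite cyclic cover $N:=\widetilde{M_1}$, so $(M_n,p_n)$ converges geometrically to $(N,p_\infty)$. Here $N\cong\Sigma_g\times\R$ carries the doubly degenerate hyperbolic structure whose two ending laminations are the stable and unstable laminations of $\varphi$ (Thurston, Fried); in particular $\pi_1N\cong A$ has rank \emph{exactly} $2g$, and, because $\varphi$ is pseudo-Anosov, these laminations \emph{fill} $\Sigma_g$. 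Moreover a loop in $M_n$ of nonzero degree over the base circle maps isometrically onto a loop of degree $\pm n$ in $M_1$, so its length is at least the least length of a degree-$\pm n$ closed geodesic in $M_1$, and this tends to infinity with $n$ (otherwise a subsequential $C^0$ limit would be a closed geodesic of $M_1$ of bounded, hence eventually constant, degree). Thus $M_n$ is geometrically long in the direction of the fibration.

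Finally I would bring in carrier graphs in the sense of White. For each offending $n$, choose a minimal-length carrier graph $f_n\colon\mathcal G_n\to M_n$ of rank at most $2g$ passing through $p_n$; this exists, and since $M_n$ is uniformly thick it has a bounded number of edges, each of length at least a universal constant. As $\pi_1\mathcal G_n\twoheadrightarrow G_n\to\Z$ is onto, some circuit of $\mathcal G_n$ has nonzero degree over the base circle, so by the previous paragraph at least one edge of $\mathcal G_n$ is forced to grow with $n$; deleting these long edges leaves a graph $\mathcal G_n'$ of uniformly bounded total length and first Betti number at most $2g-1$. The crux --- and the step I expect to be the main obstacle --- is to show that along a subsequence the restrictions $f_n|_{\mathcal G_n'}$ converge, in the geometric limit, to a carrier graph for $N$: this would force $\pi_1N$ to be generated by at most $2g-1$ elements, contradicting $\mathrm{rank}(\pi_1(\Sigma_g))=2g$. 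Making this precise --- above all, checking that after the long edges escape to infinity the bounded part still $\pi_1$-surjects onto the geometric limit --- is where the real content lies: it uses the interaction between algebraic and geometric limits of Kleinian surface groups (Thurston's double limit theorem, Minsky's bounded-geometry and ending-lamination machinery, Canary's covering theorem) together with White's control of how a minimal carrier graph sits in the thick part, and it relies essentially on the ending laminations of $N$ being filling, so that $N$ has no geometrically finite direction into which part of the limiting graph could retract. The remaining ingredients --- hyperbolicity of $M_1$, the uniform thickness of the $M_n$, and the identification of the geometric limit with $N$ --- are comparatively soft, and I would dispose of them first.
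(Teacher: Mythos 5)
This theorem is not proved in the paper at all: it is stated with attribution to \cite{souto} purely as motivation for Theorem \ref{powe}, so there is no in-paper argument to measure your plan against. What you have written is, in outline, Souto's own published strategy: uniform thickness of the cyclic covers $M_n$ of the hyperbolic mapping torus $M_1$, geometric convergence of $(M_n,p_n)$ to the doubly degenerate infinite cyclic cover $N$ whose ending laminations fill, divergence of the lengths of loops of nonzero degree over the base circle, and White-style minimal carrier graphs whose bounded-length part should survive into the geometric limit and carry $\pi_1(N)\cong A$, contradicting $\mathrm{rank}(A)=2g$ if $\mathrm{rank}(G_n)\le 2g$. The soft ingredients (Thurston hyperbolization, $\mathrm{inj}(M_n)\ge\mathrm{inj}(M_1)$ via the covering $M_n\to M_1$, identification of the limit with $N$, unboundedness of geodesics of degree $\ge n$) are correctly assembled.

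However, this is a plan, not a proof, and the step you yourself flag as ``the crux'' is exactly where all the content of Souto's paper lives; nothing in your text establishes it. Two concrete gaps: (i) deleting the long edges from $\mathcal G_n$ in general destroys $\pi_1$-surjectivity onto $G_n$, and it is far from automatic that the remaining subgraph $\mathcal G_n'$ converges to a \emph{carrier graph} for the geometric limit $N$ --- the algebraic limit of the images of $\pi_1\mathcal G_n'$ could a priori be a proper (even infinitely generated, or geometrically finite) subgroup of $\pi_1 N$; ruling this out is where the filling property of the ending laminations, the covering theorem, and White's control of minimal carrier graphs must actually be deployed, and Souto does this via a careful chain decomposition of the carrier graph rather than by simply discarding long edges. (ii) The inequality $b_1(\mathcal G_n')\le 2g-1$ is asserted but not justified: the edge forced to be long by a nonzero-degree circuit may lie in a spanning tree of $\mathcal G_n$, in which case its removal disconnects the graph without lowering the first Betti number. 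Both points are fixable (and are fixed in \cite{souto}), but as written the argument does not close; only the trivial upper bound $\mathrm{rank}(G_n)\le 2g+1$ is actually proved.
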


We prove:
\begin{thm} \label{powe} Given $\varphi$ of infinite order in $ GL(d,\Z)$, with $d\ge2$,  there exists $n_0$ such that  the rank of $G_n=\Z^d\rtimes_{\varphi^n}\Z$ is $\ge3$ for all $n\ge n_0$.
\end{thm}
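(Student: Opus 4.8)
The plan is to use the description of the rank recalled after Theorem~\ref{decid}: $\mathrm{rank}(G_n)\ge 3$ is equivalent to $\Z^d$ not being generated by a single $\varphi^n$-orbit, hence to $\Z^d$ not being a cyclic $\Z[\varphi^n]$-module, hence to $\varphi^n$ not being conjugate in $GL(d,\Z)$ to a companion matrix. So I must show this fails for every large $n$.

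I would first reduce to the essential case, using only the inclusion $\Z[\varphi^n]\subseteq\Z[\varphi]$ in $M_d(\Z)$. If $\varphi$ is derogatory, then $\Z[\varphi^n]$ has $\Z$-rank at most that of $\Z[\varphi]$, which is $<d$, so $\Z^d$ is not cyclic over $\Z[\varphi^n]$; and if $\varphi$ is non-derogatory but not conjugate to a companion matrix, i.e.\ $\Z^d\not\cong\Z[\varphi]$ as $\Z[\varphi]$-modules, then a cyclic generator $v$ of $\Z^d$ over $\Z[\varphi^n]$ would satisfy $\Z[\varphi]v\supseteq\Z[\varphi^n]v=\Z^d$, hence $\Z[\varphi]v=\Z^d$, forcing $\Z^d\cong\Z[\varphi]$ by faithfulness---a contradiction. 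In both cases $\mathrm{rank}(G_n)\ge 3$ for all $n$. So I may assume $\varphi$ is the companion matrix of its characteristic polynomial $\chi$, identifying $\Z^d$ with $\Z[x]/(\chi)=\Z[\alpha]$ on which $\varphi$ acts by multiplication by $\alpha$. A covolume argument now shows $\Z[\alpha]$ is cyclic over $\Z[\alpha^n]$ only if $\Z[\alpha^n]=\Z[\alpha]$: a generator $v$ has $v,v^{-1}\in\Z[\alpha]$, so $v$ is a unit, so multiplication by $v$ is volume-preserving on $\Z[\alpha]\otimes\R$, so $\Z[\alpha^n]$ and $\Z[\alpha^n]v=\Z[\alpha]$ have equal covolume and the inclusion $\Z[\alpha^n]\subseteq\Z[\alpha]$ is an equality. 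Thus Theorem~\ref{powe} becomes: $[\Z[\alpha]:\Z[\alpha^n]]\ge 2$ for all large $n$.

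To prove this I would show $[\Z[\alpha]:\Z[\alpha^n]]\to\infty$. This index equals $|\det W_n|$, where $W_n=\bigl(e_1\mid\varphi^n e_1\mid\cdots\mid\varphi^{(d-1)n}e_1\bigr)$ is the Krylov matrix of $\varphi^n$ at $e_1$---its columns being $1,\alpha^n,\dots,\alpha^{(d-1)n}$ expressed in the power basis $1,\alpha,\dots,\alpha^{d-1}$---and when $\chi$ is separable it equals also $\bigl|\mathrm{disc}(\chi_{\varphi^n})/\mathrm{disc}(\chi)\bigr|^{1/2}=\prod_{i<j}\bigl|\tfrac{\alpha_i^n-\alpha_j^n}{\alpha_i-\alpha_j}\bigr|$, where $\alpha_1,\dots,\alpha_d$ are the eigenvalues of $\varphi$. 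Evaluating this (confluent) Vandermonde determinant through the Jordan form of $\varphi$, one finds that $|\det W_n|$ is, up to positive constants and a factor tending to $1$, the product of an archimedean term $\beta^n$ with $\beta=\prod_i|\alpha_i|^{\,d-i}$ (eigenvalues counted with multiplicity, indexed so $|\alpha_1|\ge\cdots\ge|\alpha_d|$), a polynomial in $n$ whose degree is positive precisely when $\varphi$ is not diagonalizable, and \emph{separation} terms $\prod|1-(\alpha_j/\alpha_i)^n|$ over pairs with $|\alpha_i|=|\alpha_j|$. Since $\varphi$ has infinite order, Kronecker's theorem (an algebraic integer whose conjugates all lie in the closed unit disc is $0$ or a root of unity) forces $\varphi$ either to have an eigenvalue off the unit circle---whence $\beta>1$ strictly, by Chebyshev's sum inequality applied to the two decreasing sequences $(d-i)_i$ and $(\log|\alpha_i|)_i$, the latter having sum $\log|\det\varphi|=0$---or to be quasi-unipotent and non-diagonalizable, whence the polynomial factor is nonconstant and every $\alpha_j/\alpha_i$ is a root of unity. (When $\varphi^n$ is derogatory---exactly when some $\alpha_i^n=\alpha_j^n$---then $\Z^d$ is anyway not cyclic over $\Z[\varphi^n]$, by the rank argument of the previous paragraph.)

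The main obstacle is bounding the separation terms from below. When $\alpha_j/\alpha_i$ is a root of unity, $|1-(\alpha_j/\alpha_i)^n|$ is bounded below by a positive constant for every $n$ not causing $\alpha_i^n=\alpha_j^n$; but pairs with $\omega:=\alpha_j/\alpha_i$ on the unit circle and \emph{not} a root of unity do occur---already for $\varphi$ the companion matrix of a Salem polynomial of degree $\ge 4$---and here I would invoke Baker's theorem on linear forms in logarithms, which gives an effective estimate $|1-\omega^n|\ge C_\omega\,n^{-\kappa_\omega}$. A polynomial lower bound suffices, since $\beta^n$ (or, in the quasi-unipotent case, where no such $\omega$ occurs, the nonconstant polynomial factor) outgrows any fixed power of $n$. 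Assembling the pieces, for all large $n$ with $\varphi^n$ non-derogatory one gets $|\det W_n|=[\Z[\alpha]:\Z[\alpha^n]]\ge 2$, and together with the derogatory case this gives $\mathrm{rank}(G_n)\ge3$ for all large $n$. (If $\chi$ is not separable but $\varphi$ is not quasi-unipotent, one first passes to $\Z[\alpha]$ modulo its nilradical, reducing to the separable case; if $\chi$ is not separable and $\varphi$ is quasi-unipotent the Jordan-form analysis above applies unchanged.)
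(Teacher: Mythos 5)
Your proof is correct, and it in fact establishes the same stronger statement as the paper's key Proposition (the index $\delta_n=[\Z[\alpha]:\Z[\alpha^n]]$ tends to infinity), but the decisive analytic ingredient is different. Both arguments reduce, via the companion-matrix/cyclic-vector dictionary, to showing that the (confluent) Vandermonde ratio $\prod_{i<j}\bigl|(\alpha_i^n-\alpha_j^n)/(\alpha_i-\alpha_j)\bigr|$ eventually exceeds $1$, and both isolate the dominant term $\beta^n$ with $\beta>1$ unless $\varphi$ is quasi-unipotent, in which case a polynomial factor coming from a nontrivial Jordan block takes over. The divergence is in how cancellation among the equal-modulus factors $|1-(\alpha_j/\alpha_i)^n|$ is ruled out: you lower-bound each such factor by $C\,n^{-\kappa}$ via Baker's theorem, which $\beta^n$ then swamps; the paper instead argues by contradiction, applies Skolem--Mahler--Lech to the integer recurrent sequence $c_n$ to place the putative bad $n$ in an arithmetic progression, and observes that the sum of top-modulus terms itself factors as a product, so its vanishing would force $\lambda_k^n=\lambda_m^n$ and hence $D_n=0$. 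Your route is precisely the ``alternative approach'' the paper attributes to Amoroso--Zannier; it buys effectivity and uniformity in $n_0$ at the cost of a much deeper transcendence input, whereas the paper's route is softer but non-effective. Two minor remarks: your covolume argument reducing to the generator $v=1$ is a clean substitute for the paper's observation that $\delta_n$ is independent of the cyclic vector, and your final parenthetical about passing to the nilradical is unnecessary, since the confluent Vandermonde evaluation you already invoke handles repeated eigenvalues uniformly.
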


The   theorem   becomes false if the hypothesis that $\varphi$ has infinite order is dropped, or if 3 is replaced by 4. We do not know hypotheses that would guarantee that the rank is $d+1$ for $n$ large.

An equivalent formulation of Theorem \ref{powe} is:
\begin{thm} \label{pow2} Given a matrix $M$  of infinite order in $ GL(d,\Z)$, with $d\ge2$,  there exists $n_0$ such that  $M^n$ is not conjugate to a companion matrix  if $n\ge n_0$.
\end{thm}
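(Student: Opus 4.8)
The plan is to prove the equivalent assertion that $M^n$ is conjugate (in $GL(d,\Z)$) to a companion matrix for only finitely many positive integers $n$. Throughout I use the dictionary underlying Theorem~\ref{decid}: for $N\in GL(d,\Z)$, $N$ is conjugate to a companion matrix if and only if $\Z^d$ is a cyclic module over the commutative subring $\Z[N]\subset M_d(\Z)$, i.e.\ $\Z^d=\Z[N]v$ for some $v$. The first step is to reduce the whole statement to a comparison of the subrings $\Z[M^n]$ and $\Z[M]$. Suppose $\Z^d=\Z[M^n]v_0$. Since $\Z[M^n]\subseteq\Z[M]$ we get $\Z^d\subseteq\Z[M]v_0\subseteq\Z^d$, so $\Z^d=\Z[M]v_0$, i.e.\ $M$ is already conjugate to a companion matrix; moreover both maps $r\mapsto rv_0$, from $\Z[M^n]$ and from $\Z[M]$ onto $\Z^d$, are injective (if $rv_0=0$ then $r\cdot\Z^d=\Z[M]rv_0=0$, so $r=0$), hence both are isomorphisms of abelian groups onto $\Z^d$, which forces $\Z[M^n]=\Z[M]$ as subrings of $M_d(\Z)$. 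Conversely, $\Z[M^n]=\Z[M]$ together with "$M$ conjugate to a companion matrix" gives the same for $M^n$. So it suffices to show: \emph{if $M\in GL(d,\Z)$ has infinite order and $d\ge2$, then $\Z[M^n]=\Z[M]$ only for finitely many $n$.} (Note also that if $M$, or $M^n$, is derogatory over $\Q$ then $\Z[M]$, resp.\ $\Z[M^n]$, has $\Z$-rank $<d$, so no cyclic module over it is isomorphic to $\Z^d$; hence we may assume $M$ and $M^n$ are nonderogatory over $\Q$, so that $\Z[M]\cong\Z[x]/(f)$ with $f$ the characteristic $=$ minimal polynomial of $M$, and $\Z[M^n]$ has finite index in $\Z[M]$.)

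Next I would compute that index. Let $A_n$ be the integer matrix expressing $1,M^n,\dots,M^{(d-1)n}$ in the basis $1,M,\dots,M^{d-1}$ of $\Z[M]\cong\Z[x]/(f)$; then $[\Z[M]:\Z[M^n]]=|\det A_n|$. If $f$ is separable with roots $\theta_1,\dots,\theta_d$, a Vandermonde computation yields
\[
[\Z[M]:\Z[M^n]]=|\det A_n|=\prod_{i<j}\Bigl|\frac{\theta_i^{\,n}-\theta_j^{\,n}}{\theta_i-\theta_j}\Bigr|=\bigl(|\mathrm{disc}(f_n)|/|\mathrm{disc}(f)|\bigr)^{1/2},
\]
where $f_n$ is the characteristic polynomial of $M^n$. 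Because $M$ has infinite order, is nonderogatory, and $f$ is separable, $M$ is not quasi-unipotent, so not all $|\theta_i|$ equal $1$; since $\prod_i|\theta_i|=|f(0)|=1$, there are roots of absolute value $>1$ and of absolute value $<1$. Taking logarithms, $\sum_{i<j}\log|\theta_i^{\,n}-\theta_j^{\,n}|\ge n\sum_{i<j}\log\max(|\theta_i|,|\theta_j|)-O(\log n)$, and $\sum_{i<j}\log\max(|\theta_i|,|\theta_j|)>\tfrac{d-1}{2}\sum_i\log|\theta_i|=0$ since the inequality $\max(a,b)\ge\tfrac{a+b}{2}$ is strict on the pair formed by a root of modulus $>1$ and one of modulus $<1$. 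Hence $[\Z[M]:\Z[M^n]]\to\infty$, so $\Z[M^n]\ne\Z[M]$ once $n$ is large.

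The non-separable case is handled by an analogous elementary-divisor computation: if $M$ is nonderogatory with a genuine Jordan block then some primary factor $g^e$ of $f$ has $e\ge2$, and already the corresponding block makes $[\Z[M]:\Z[M^n]]\to\infty$ (for instance $[\Z[x]/(x-1)^e:\Z[x^n\bmod(x-1)^e]]=n^{e(e-1)/2}$), while if all exponents are $1$ we are back in the separable situation above, where infinite order precludes quasi-unipotence. I expect the main obstacle to lie in the lower bound $|\theta_i^{\,n}-\theta_j^{\,n}|\gg n^{-C}$ needed when $|\theta_i|=|\theta_j|$, i.e.\ when $\theta_j/\theta_i$ lies on the unit circle: if $\theta_j/\theta_i$ is not a root of unity this calls for a Diophantine input such as Baker's theorem on linear forms in logarithms, whereas if it is a root of unity of order $r$ then separability of $f_n$ forces $r\nmid n$ and the bound is an explicit constant. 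An alternative to this analytic point is reduction modulo primes: $\Z[M^n]=\Z[M]$ would force $\mathbb{F}_p[\bar M^n]=\mathbb{F}_p[\bar M]$ for every $p$, and since the order $o_p(M)$ of $M$ modulo $p$ tends to infinity (as $M$ has infinite order), one tries to choose, for each prospective large $n$, a prime $p$ whose order $o_p(M)$ shares enough of a common factor with $n$ that $\bar M^n$ becomes a scalar — hence derogatory — modulo $p$; but making this choice uniform in $n$ requires controlling the distribution of the orders $o_p(M)$, which is itself delicate.
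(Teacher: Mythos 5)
Your reduction is sound and in fact lands exactly on the quantity the paper studies: writing ``$N$ is conjugate to a companion matrix'' as ``$\Z^d=\Z[N]v$ for some $v$'' is Lemma \ref{comp}, your observation that this forces $\Z[M^n]=\Z[M]$ is correct, and the index $[\Z[M]:\Z[M^n]]$ you then compute equals (via $r\mapsto rv$) the index $\delta_n$ of the subgroup generated by the $M^n$-orbit of a cyclic vector; your ratio of Vandermonde determinants is, up to the constant $\prod_{i<j}|\theta_i-\theta_j|$, the determinant $c_n$ of Proposition \ref{key}. Your identification of the dominant term and the proof that its growth rate $K=\prod_m|\theta_m|^{m-1}$ exceeds $1$ (Kronecker plus infinite order) also match the paper.

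However, the step you yourself flag as ``the main obstacle'' is the actual content of the theorem, and you do not prove it. When $\theta_i\ne\theta_j$ have $|\theta_i|=|\theta_j|$ and $\theta_i/\theta_j$ is not a root of unity (this genuinely occurs, e.g.\ for the unit-circle conjugates of a Salem number), the inequality $\sum_{i<j}\log|\theta_i^{\,n}-\theta_j^{\,n}|\ge n\sum_{i<j}\log\max(|\theta_i|,|\theta_j|)-O(\log n)$ is precisely what must be justified, and neither of your two suggested fixes is carried out (the second, via orders mod $p$, you rightly call delicate). The paper's resolution is the Skolem--Mahler--Lech theorem: if the recurrent sequence $c_n$ took a fixed nonzero value $c$ infinitely often, it would do so on a whole arithmetic progression; on that progression the sum of the maximal-modulus exponential terms of $c_n$ must vanish identically, but that sum factors as a product of factors $(\theta_m^{\,n}-\theta_k^{\,n})$ (for equal-modulus pairs) and $\theta_m^{\,n}$ (otherwise), so its vanishing forces $\theta_k^{\,n}=\theta_m^{\,n}$ for some $k\ne m$ and hence $c_n=0$, a contradiction. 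This sidesteps any Baker-type lower bound; the Baker route you gesture at is the one the paper attributes to Amoroso--Zannier, and if executed it would additionally give a uniform $n_0$. Finally, your non-separable case is only sketched: the per-block computation does not by itself control the cross factors between blocks of equal modulus, and the paper instead invokes an explicit generalized-Vandermonde factorization of $D_n$ before running the same maximal-modulus argument.
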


Our proof is based on the Skolem-Mahler-Lech theorem on linear recurrent sequences \cite{rs}. There are alternative approaches based on equations in $S$-units and Baker's theory on linear forms in logarithms. They are due to Amoroso-Zannier \cite  {AZ} and yield uniformity: \emph{one may take $n_0=[C d^6(\log d)^6]$ where $C$ is a universal constant (independent of $M$).}

We conclude with a few   open questions. 

Our analysis on $\Z^d$ uses the Cayley-Hamilton theorem. This is not available in a non-abelian free group $F_d$. Given $\varphi\in Aut(F_d)$, can one decide whether $F_d$ may be generated by a single  $\varphi$-orbit? More basically: given $\varphi\in Aut(F_d)$ and $g\in F_d$, can one decide whether the $\varphi$-orbit of $g$ generates $F_d$?

What about ascending HNN extensions? For instance, let $\varphi$ be an injective endomorphism of $\Z^d$ (a matrix with integral entries and non-zero determinant). Let $G=\Z^d*_\varphi=\langle \Z^d,t\mid tgt\m=\varphi(g)\rangle$. Can one decide whether $G$ has rank 2?
\medskip

{\it \small
Acknowledgements. We wish to thank J.-L.\  Colliot-Th\'el\`ene, F.\  Grunewald, P.\ de la Harpe, G.\ Henniart,  and number theorists in Caen, in particular F.\  Amoroso, J.-P.\  Bezivin, D.\  Simon, for helpful conversations related to this work. The second author would also like to thank LMNO of Universit\'e de Caen for their hospitality during the
preparation of the present work.}

   \section{Generalities}
  
Let $A$ be a finitely generated group. The letters $a,b,v$ will always denote elements of $A$. We denote by $i_a$ the inner automorphism $v\mapsto ava\m$.

Given $\varphi
\in Aut(A)$, we let $G$  
be the mapping torus $G=A\rtimes_\varphi \Z=  \langle A,t\mid tat\m=\varphi(a)\rangle$. There is an exact sequence $1\to A\to G\to \Z\to 1$. Up to isomorphism, $G$ only depends on the image of $\varphi$ in $Out(A)$. Any $g\in G$ has   unique forms $at^n, t^na'$ with $n\in\Z$. 

If $N$ is a characteristic subgroup of $A$, we denote by $\bar \varphi$ the automorphism induced on $A/N$. There is an exact sequence $1\to N\to A\rtimes_\varphi\Z\to A/N\rtimes_{\bar\varphi}\Z\to 1$.

 The rank $rk(G)$ is the minimum cardinality of a generating set. We let $vrk(G)$ be the   minimum number of elements needed to generate a finite index subgroup: $vrk(G)=\inf _H rk(H)$ with the infimum taken over all subgroups of finite index.
 
 Two generating sets are Nielsen equivalent if one can pass from one to the other by Nielsen operations:  permuting the generators, replacing $g_i$ by $g_i\m$ or $g_ig_j$. For instance, any generating set of $\Z$ is Nielsen equivalent to $\{0,\dots,0,1\}$ by the Euclidean algorithm.
 
 The $\varphi$-orbit of $a\in A$ is $\{\varphi^n(a)\mid n\in\Z\}$. We denote by
$\Or(\varphi)$ the minimum number of $\varphi$-orbits needed to generate $A$. Clearly $\Or(\varphi)\le rk(A)$.
 We also denote by 
 $\vor(\varphi)$ the minimum number of $\varphi$-orbits needed to generate a finite index subgroup of $A$, so $\vor(\varphi)\le vrk(A)$.

  \begin{lem} Given $a,a_1,\dots,a_k\in A$, the intersection   $A'=  \langle a_1,\dots,a_k, at\rangle\cap A$   is generated by the $(i_a\circ \varphi)$-orbits of $a_1,\dots,a_k$.

 The $(i_a\circ \varphi)$-orbits of $a_1,\dots,a_k$ generate $A$ if and only if  $a_1,\dots,a_k, at$ generate $G$.
 \end{lem}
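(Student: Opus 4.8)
The plan is to recognise $\psi:=i_a\circ\varphi\in Aut(A)$ as the automorphism of $A$ given by conjugation by $at$, and then to collect elements of $H:=\langle a_1,\dots,a_k,at\rangle$ into a normal form adapted to the decomposition $1\to A\to G\to\Z\to1$. First one computes $(at)v(at)\m=a\varphi(v)a\m=\psi(v)$ for $v\in A$, hence $(at)^nv(at)^{-n}=\psi^n(v)$ for all $n\in\Z$. Let $B\le A$ be the subgroup generated by all $\psi^n(a_i)$ ($n\in\Z$, $1\le i\le k$), i.e.\ by the $\psi$-orbits of the $a_i$; it is $\psi$-invariant. Since $H$ contains each $a_i$ together with $at$, it contains every $\psi^n(a_i)$, so $B\subseteq H$; as each $a_i$ already lies in $B$, we also get $H\subseteq\langle B,at\rangle\subseteq H$, whence $H=\langle B,at\rangle$.

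Next, using $(at)v=\psi(v)(at)$ and $(at)\m v=\psi\m(v)(at)\m$, one pushes all the $B$-letters in a word over the generating set $B\cup\{at\}$ of $\langle B,at\rangle$ to the left, so that every element of $\langle B,at\rangle$ can be written $b\,(at)^n$ with $b\in B$ and $n\in\Z$ — here the $\psi$-invariance of $B$ is precisely what makes the collected prefix land back in $B$. Writing $(at)^n=a_nt^n$ with $a_n\in A$ (possible by uniqueness of the form $at^n$, since $at\mapsto1$ under $G\to\Z$), such an element equals $(ba_n)t^n$, which maps to $n$ under $G\to\Z$; so it lies in $A$ precisely when $n=0$, i.e.\ when it equals $b\in B$. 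Hence $A'=H\cap A=\langle B,at\rangle\cap A=B$, which is the first assertion.

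For the second assertion, if the $\psi$-orbits of $a_1,\dots,a_k$ generate $A$, i.e.\ $B=A$, then $a\in H$, so $t=a\m(at)\in H$, so $H\supseteq\langle A,t\rangle=G$; conversely, if $H=G$ then $B=H\cap A=A$ by the first part. The whole argument is essentially a computation; the one step deserving care is the collecting process, which must genuinely invoke the relations $tvt\m=\varphi(v)$ and the uniqueness of the forms $at^n$ in $G$, and I do not anticipate a real obstacle.
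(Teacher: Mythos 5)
Your proof is correct and follows essentially the same route as the paper's: identify conjugation by $at$ with $i_a\circ\varphi$, then use the normal form (equivalently, the vanishing exponent sum of $t$) to show that the elements of $\langle a_1,\dots,a_k,at\rangle$ lying in $A$ are exactly the products of conjugates $(at)^na_i(at)^{-n}$. Your collecting argument is just a slightly more explicit version of the paper's one-line observation about exponent sums.
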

 
 \begin{proof} One has $(i_a\circ \varphi)^n(v)=(at)^nv(at)^{-n}$ for $v\in A$ and $n\in\Z$. This shows that the $(i_a\circ \varphi)$-orbit of $a_i$ is contained in $A'$. 
 Conversely, if  $v\in A'$, write it  in terms of $a_1,\dots,a_k, at$. The exponent sum of $t$ is 0, so $v$ is a product of elements of the form $(at)^na_i(at)^{-n}$.
 
 If $A'=A$, then $\langle a_1,\dots,a_k, at \rangle$ contains $A$ and $at$, so equals $G$. 
 \end{proof}
 
 \begin{cor} $rk(G)=1+\min_{a\in A}\Or(i_a\circ \varphi)$.
\end{cor}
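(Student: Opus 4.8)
The plan is to prove the equality $rk(G)=1+\min_{a\in A}\Or(i_a\circ\varphi)$ by establishing the two inequalities separately; each is essentially one direction of the biconditional in the Lemma, combined with a normal-form argument for the generator of $G$ that maps onto $\Z$. For the inequality $\le$, choose $a\in A$ attaining the minimum, set $k=\Or(i_a\circ\varphi)$, and pick $a_1,\dots,a_k\in A$ whose $(i_a\circ\varphi)$-orbits generate $A$. The Lemma then gives that $\{a_1,\dots,a_k,at\}$ generates $G$, so $rk(G)\le k+1=1+\min_{a\in A}\Or(i_a\circ\varphi)$.

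For the inequality $\ge$, start from an arbitrary generating tuple $(g_1,\dots,g_m)$ of $G$. Its image $(\bar g_1,\dots,\bar g_m)$ generates $\Z$, so by the Euclidean algorithm there is a sequence of Nielsen moves carrying it to $(0,\dots,0,1)$. Since each elementary Nielsen move on a tuple in $G$ (permuting entries, replacing $g_i$ by $g_i\m$, replacing $g_i$ by $g_ig_j$) projects to the corresponding move on the image tuple in $\Z$ and preserves the property of generating $G$, I would lift this sequence to obtain a new generating tuple $(g_1',\dots,g_m')$ of $G$ with $g_1',\dots,g_{m-1}'\in A$ and $g_m'$ mapping to $1\in\Z$. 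By uniqueness of the normal form $at^n$, this forces $g_m'=at$ for a unique $a\in A$. Now $g_1',\dots,g_{m-1}',at$ generate $G$, so the Lemma gives that the $(i_a\circ\varphi)$-orbits of $g_1',\dots,g_{m-1}'$ generate $A$; hence $\Or(i_a\circ\varphi)\le m-1$, and therefore $\min_{a\in A}\Or(i_a\circ\varphi)\le m-1$. Applying this to a generating tuple of minimal size $m=rk(G)$ yields $rk(G)\ge 1+\min_{a\in A}\Or(i_a\circ\varphi)$.

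The only delicate point is the normalization step: one must check that a Euclidean reduction carried out on the images in $\Z$ can be realized by genuine Nielsen operations upstairs in $G$, so that a minimal generating set may be assumed to contain exactly one element of the form $at$ with all the others lying in $A$. This compatibility between Nielsen moves and the projection $G\to\Z$ is immediate from the definitions, using the observation (already recorded in the excerpt) that any generating set of $\Z$ is Nielsen equivalent to $\{0,\dots,0,1\}$; once it is in place, the corollary follows formally from the Lemma.
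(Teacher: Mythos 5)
Your proof is correct and follows exactly the paper's (much more tersely stated) argument: the $\le$ direction is immediate from the Lemma, and the $\ge$ direction normalizes a minimal generating set to the form $\{a_1,\dots,a_{m-1},at\}$ via Nielsen moves realizing the Euclidean algorithm on $t$-exponent sums, then applies the Lemma again. No issues.
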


\begin{proof}  $\le$  is clear. For the converse, use that  any finite generating set of $G$ is Nielsen equivalent to a set $\{a_1,\dots,a_k, at\}$ (Euclid's algorithm).
\end{proof}

 \begin{cor}   $vrk(G)=1+\min_{a\in A, n\ne0}\vor(i_a\circ \varphi^n)$.
\end{cor}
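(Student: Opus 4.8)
\emph{Proof proposal.} The plan is to adapt the argument of the previous corollary, working inside the finite-index subgroups $A\rtimes_{\varphi^n}\Z=\langle A,t^n\rangle$ of $G$ (the preimages of the subgroups $n\Z$ under the projection $\pi\colon G\to\Z$) instead of inside $G$ itself. The key observation is that conjugation by $at^n$ induces on $A$ the automorphism $i_a\circ\varphi^n$; hence the Lemma, applied inside $A\rtimes_{\varphi^n}\Z$ with $at^n$ in the role of ``$at$'', tells us that $\langle a_1,\dots,a_k,at^n\rangle\cap A$ is generated by the $(i_a\circ\varphi^n)$-orbits of $a_1,\dots,a_k$, and that these orbits generate $A$ if and only if $a_1,\dots,a_k,at^n$ generate $A\rtimes_{\varphi^n}\Z$.

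For the inequality $\le$, fix $n\ne0$, $a\in A$, and elements $a_1,\dots,a_k\in A$ with $k=\vor(i_a\circ\varphi^n)$ whose $(i_a\circ\varphi^n)$-orbits generate a finite-index subgroup $A'$ of $A$. Put $H=\langle a_1,\dots,a_k,at^n\rangle$. By the Lemma, $H\cap A=A'$, and since moreover $\pi(H)=n\Z$, a short diagram chase gives $[G:H]=n\,[A:A']<\infty$. Thus $H$ is a finite-index subgroup generated by $k+1$ elements, so $vrk(G)\le k+1$; minimizing over $a$ and $n$ yields one half of the statement.

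For the reverse inequality, pick a finite-index subgroup $H\le G$ realizing $vrk(G)$ (the infimum is attained, being an infimum of positive integers) and set $r=rk(H)=vrk(G)$. Since $A$ has infinite index in $G$, the image $\pi(H)$ is a nontrivial subgroup $n\Z$ of $\Z$. Starting from an arbitrary $r$-element generating set of $H$ and running the Euclidean algorithm on the images in $n\Z\cong\Z$, perform Nielsen operations to bring the generating set to the form $\{a_1,\dots,a_{r-1},at^n\}$ with $a_1,\dots,a_{r-1},a\in A$. Then $H\subset A\rtimes_{\varphi^n}\Z$, so the Lemma shows $H\cap A$ is generated by the $(i_a\circ\varphi^n)$-orbits of $a_1,\dots,a_{r-1}$; as $[A:H\cap A]\le[G:H]<\infty$, this gives $\vor(i_a\circ\varphi^n)\le r-1$, hence $1+\min_{a,\,n\ne0}\vor(i_a\circ\varphi^n)\le r=vrk(G)$.

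The two inequalities give the claimed formula. I expect no serious obstacle here; the only points requiring attention are the elementary bookkeeping with $\pi$ — in particular that, for a subgroup $H$ meeting $G\setminus A$, having finite index in $G$ is the same as $H\cap A$ having finite index in $A$ — and the discipline of invoking the Lemma inside $A\rtimes_{\varphi^n}\Z$, with $t^n$ in the role of $t$, rather than in $G$.
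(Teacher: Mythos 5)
Your proposal is correct and follows essentially the same route as the paper: the forward inequality by observing that $\langle a_1,\dots,a_k,at^n\rangle$ maps onto $n\Z$ and meets $A$ in a finite-index subgroup, and the reverse by Nielsen-reducing a generating set of a finite-index subgroup to the form $\{a_1,\dots,a_{r-1},at^n\}$ with $n\ne0$ and applying the Lemma with $t^n$ in the role of $t$. The paper states this more tersely, but the content is identical.
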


\begin{proof}  If $n\ne0$ and the $(i_a\circ \varphi^n)$-orbits of $a_1,\dots,a_k$ generate a finite index subgroup of  $A$, the subgroup of $G$ generated by $a_1,\dots,a_k, at^n$ has finite index because it maps  onto $n\Z$ and it meets $A$ in a subgroup of  finite index.

Any finite subset of $G$ generating a finite index subgroup is Nielsen equivalent to $\{a_1,\dots,a_k, at^n\}$ with $n\ne0$, and the $(i_a\circ\varphi^n)$-orbits of $a_1,\dots,a_k$ generate a finite index subgroup of $A$. \end{proof}

 \begin{cor} Suppose that  
    $A$ is abelian.
     \begin{enumerate}
  
\item 
 $rk(G)=1+ \Or(  \varphi)$ and $vrk(G)=1+ \vor(  \varphi )$. 
\item  $G$ has rank $\le2$ if and only if $A$  
is generated by a single $\varphi$-orbit. A pair $(a_1,at)$ generates $G$ if and only if the $\varphi$-orbit of $a_1$ generates $A$. 
\item $vrk(G)$ is computable.
 \end{enumerate}
\end{cor}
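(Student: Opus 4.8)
The plan is to obtain statements (1) and (2) directly from the two preceding corollaries and the Lemma, using only that inner automorphisms of an abelian group are trivial together with one elementary remark about orbits, and to reduce (3) to a routine linear-algebra computation. For (1): since $A$ is abelian, $i_a=\mathrm{id}$ for every $a$, so $i_a\circ\varphi=\varphi$ and $i_a\circ\varphi^n=\varphi^n$; the formula $rk(G)=1+\min_{a\in A}\Or(i_a\circ\varphi)$ collapses to $rk(G)=1+\Or(\varphi)$, and $vrk(G)=1+\min_{a\in A,\,n\ne0}\vor(i_a\circ\varphi^n)$ to $vrk(G)=1+\min_{n\ne0}\vor(\varphi^n)$. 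I would then note that this last minimum equals $\vor(\varphi)$: since $n\Z\subset\Z$, the $\varphi^n$-orbit of an element is contained in its $\varphi$-orbit, so any family of $\varphi^n$-orbits generating a finite-index subgroup lies in the subgroup generated by the corresponding $\varphi$-orbits, which is therefore also of finite index; hence $\vor(\varphi)\le\vor(\varphi^n)$ for all $n\ne0$, with equality at $n=1$. For (2): by (1), $rk(G)\le2$ iff $\Or(\varphi)\le1$, i.e.\ iff $A$ is generated by a single $\varphi$-orbit (the orbit of $e$ when $A$ is trivial); and the Lemma with $k=1$, since $i_a\circ\varphi=\varphi$, says exactly that the $\varphi$-orbit of $a_1$ generates $A$ if and only if the pair $(a_1,at)$ generates $G$.

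For (3), by (1) it suffices to compute $\vor(\varphi)$. I would first pass to $\bar A=A/T\cong\Z^d$, where $T$ is the finite torsion subgroup and $d$ the torsion-free rank: a subgroup of $A$ has finite index if and only if its image in $\bar A$ does, and lifting generators of a family of $\bar\varphi$-orbits gives the converse, so $\vor(\varphi)=\vor(\bar\varphi)$ and one may assume $A=\Z^d$, $\varphi\in GL(d,\Z)$. A family of $\varphi$-orbits then generates a finite-index subgroup of $\Z^d$ exactly when the corresponding vectors span $\Q^d$ over $\Q$, i.e.\ when they generate $\Q^d$ as a module over the algebra $\Q[\varphi]$ generated by $\varphi$ (which contains $\varphi\m$, the characteristic polynomial of $\varphi$ having constant term $\pm1$). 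Hence $\vor(\varphi)$ is the minimal number of generators of $\Q^d$ as a $\Q[t]$-module with $t$ acting by $\varphi$; this is the number of invariant factors in the decomposition of $\Q^d$ into cyclic $\Q[t]$-modules, which can be read off the rational canonical form of $\varphi$ and is therefore effectively computable. Since $\vor(\varphi)\le rk(A)$, one finally outputs $vrk(G)=1+\vor(\varphi)$.

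I expect the only genuinely non-formal point to be (3): one must notice that finiteness of index is detected modulo the torsion subgroup, and then recognize $\vor(\varphi)$ as the minimal-number-of-module-generators invariant so that it is extracted from the (routinely computable) rational canonical form of $\varphi$. Parts (1) and (2) are bookkeeping with the corollaries already established.
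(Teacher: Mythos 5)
Your proposal is correct and follows essentially the same route as the paper: part (1) via triviality of $i_a$ and the inequality $\vor(\varphi)\le\vor(\varphi^n)$, part (2) from the Lemma with $k=1$, and part (3) by reducing modulo the torsion subgroup and identifying $\vor(\varphi)$ with the number of blocks (invariant factors) in the rational canonical form of $\varphi$ acting on $\Q^d$. You merely spell out a few steps the paper leaves implicit.
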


\begin{proof}  $i_a$ is the identity and $\vor( \varphi )\le \vor( \varphi^n)$, so 1 follows from previous results. 2 is clear. 

For 3, first suppose $A=\Z^d$. View $\varphi$ as an automorphism of the vector space $\Q^d$. Then $\vor(  \varphi )$ is    the minimum number of $\varphi$-orbits needed to generate $\Q^d$. This is computable (it is the number of blocks in the rational canonical form of $\varphi$). If $A$ has a torsion subgroup $T$, then $A/T\simeq \Z^d$ for some $d$. Let   $\bar \varphi$ be the automorphism  induced on  $\Z^d$. Then    $\vor(  \varphi )=\vor(\bar  \varphi )$ is computable.
\end{proof}

\section{Computability}

Suppose $A=\Z^d$ with $d\ge1$. We view $\varphi\in Aut(A)$ as  an automorphism of $\Z^d$ or as a matrix in $ GL(d,\Z)$. Its companion  matrix $M_\phi$ is the unique matrix of the form $$ \left(\begin{array}{ccccc}
0&&&&*\\
1&0&&&*\\
&\ddots&\ddots&&*\\
&&1&0&*\\
&&&1&*
 \end{array}\right)$$ having the same characteristic polynomial as $\varphi$ (the empty triangles are filled with 0's, and $*$ denotes an arbitrary integer).

  \begin{lem}  \label{comp} 
  Let $\varphi\in GL(d,\Z)$, with $d\ge1$. 
  \begin{enumerate}
  \item
  The following are equivalent:
  \begin{enumerate}
\item  $G=\Z^d\rtimes_\varphi \Z$  has rank 2;
\item  $\Z^d$ may be generated by a single $\varphi$-orbit;
\item There exists $a\in \Z^d$ such that $\{a,\varphi(a),\dots,\varphi^{p-1}(a)\}$ is a basis of $\Z^d$. 
\item  $\phi$ is conjugate  to its companion matrix $M_\varphi$ in $GL(d,\Z)$.
  \end{enumerate}
  \item Suppose that the $\varphi$-orbit of $a$ generates $\Z^d$. Then the $\varphi$-orbit of $b$ generates $\Z^d$ if and only if $b=h(a)$ where $h\in GL(d,\Z)$ commutes with $\varphi$. 
  \end{enumerate}
 \end{lem}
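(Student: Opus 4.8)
The plan is to establish part 1 as a chain of implications with the Cayley--Hamilton theorem doing all the work, and then deduce part 2 from the equivalence (b)$\Leftrightarrow$(c). To begin, (a)$\Leftrightarrow$(b) is immediate from the last Corollary of Section~2 applied to the abelian group $\Z^d$: $G$ has rank $\le 2$ iff $\Z^d$ is generated by a single $\varphi$-orbit, and since $\Z^d$ is infinite the rank is then exactly $2$. Also (c)$\Rightarrow$(b) is trivial, where I read $p=d$ in (c) (forced, if $\{a,\varphi(a),\dots,\varphi^{p-1}(a)\}$ is to be a basis of $\Z^d$). So the content is the cycle (b)$\Rightarrow$(c)$\Rightarrow$(d)$\Rightarrow$(b), or equivalently (b)$\Rightarrow$(c) together with (c)$\Leftrightarrow$(d).

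For (b)$\Rightarrow$(c): let $\chi(x)=x^d+c_{d-1}x^{d-1}+\dots+c_1x+c_0$ be the characteristic polynomial of $\varphi$, so the $c_i$ are integers and $c_0=(-1)^d\det\varphi=\pm1$. Cayley--Hamilton gives $\varphi^d(a)=-c_{d-1}\varphi^{d-1}(a)-\dots-c_0a$, and multiplying $\chi(\varphi)=0$ by $\varphi^{-1}$ and using $c_0^{-1}=\pm1\in\Z$ expresses $\varphi^{-1}$ as an integral polynomial in $\varphi$. Hence the $\Z$-submodule $V=\langle a,\varphi(a),\dots,\varphi^{d-1}(a)\rangle$ of $\Z^d$ is stable under both $\varphi$ and $\varphi^{-1}$, so it contains the whole $\varphi$-orbit of $a$; if that orbit generates $\Z^d$ then $V=\Z^d$. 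A generating set of $\Z^d$ of cardinality $d$ is a basis (a surjective endomorphism of $\Z^d$ is an isomorphism), which is (c).

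For (c)$\Leftrightarrow$(d): if $(a,\varphi(a),\dots,\varphi^{d-1}(a))$ is a basis, then $\varphi(\varphi^i(a))=\varphi^{i+1}(a)$ for $i<d-1$ and $\varphi(\varphi^{d-1}(a))=\varphi^d(a)=-c_{d-1}\varphi^{d-1}(a)-\dots-c_0a$ show that the matrix of $\varphi$ in this ordered basis is exactly $M_\varphi$, so $\varphi$ is conjugate to $M_\varphi$ by the integral invertible change-of-basis matrix. Conversely, if $\varphi=PM_\varphi P\m$ with $P\in GL(d,\Z)$, put $a=Pe_1$; since $M_\varphi^i e_1=e_{i+1}$ for $i=0,\dots,d-1$, the vectors $\varphi^i(a)=PM_\varphi^i e_1=Pe_{i+1}$ are the columns of $P$, hence a basis, giving (c).

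Finally, for part 2: by part 1 the orbit of $a$ generates $\Z^d$ iff $\mathcal B_a=(a,\varphi(a),\dots,\varphi^{d-1}(a))$ is a basis. If $b=h(a)$ with $h\in GL(d,\Z)$ commuting with $\varphi$, then $\varphi^i(b)=h(\varphi^i(a))$, so $h$ carries $\mathcal B_a$ to $(b,\varphi(b),\dots,\varphi^{d-1}(b))$, again a basis, and the orbit of $b$ generates. Conversely, if the orbit of $b$ generates, let $h\in GL(d,\Z)$ be the map sending $\varphi^i(a)$ to $\varphi^i(b)$ for $i=0,\dots,d-1$ (two bases); then $h\varphi=\varphi h$, since on $\varphi^i(a)$ with $i<d-1$ both sides equal $\varphi^{i+1}(b)$ and on $\varphi^{d-1}(a)$ both sides equal $\varphi^d(b)$, using $\varphi^d(a)=-c_{d-1}\varphi^{d-1}(a)-\dots-c_0 a$ and the identical relation for $b$; and $b=h(a)$. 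The only point needing care — the ``main obstacle'', though a minor one — is staying over $\Z$ rather than $\Q$: this is exactly where $\det\varphi=\pm1$ enters (to invert $c_0$ and make $V$ invariant under $\varphi\m$), and where one uses that a size-$d$ generating set of $\Z^d$ is automatically a $\Z$-basis.
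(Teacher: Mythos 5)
Your proof is correct and follows essentially the same route as the paper: Cayley--Hamilton for (b)$\Rightarrow$(c), the companion-matrix change of basis for (c)$\Leftrightarrow$(d), and the two-bases argument for part 2. You spell out two details the paper leaves implicit --- that $\varphi\m$ is an integral polynomial in $\varphi$ (so the negative powers in the orbit of $a$ stay in the span of $a,\dots,\varphi^{d-1}(a)$) and that a $d$-element generating set of $\Z^d$ is automatically a basis --- but the substance is identical.
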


\begin{proof}   We already know that (a) is equivalent to (b).  If  $a$ is the first element of a basis of $\Z^d$ in which $\varphi$ is represented by the matrix $M_\phi$, then the basis is $\{a,\varphi(a),\dots,\varphi^{d-1}(a)\}$ and 
the $\varphi$-orbit of $a$ generates $\Z^d$, so $(d)\Rightarrow(c)\Rightarrow(b)$.

Conversely, suppose that the $\varphi$-orbit of $a$ generates $\Z^d$. By the Cayley-Hamilton theorem, $\Z^d$ is generated by $\{a,\varphi(a),\dots,\varphi^{d-1}(a)\}$. This set is a basis of $\Z^d$ in which $\varphi$ is represented by $M_\phi$. This proves 1.

To prove 2, suppose that $h$ commutes with $\varphi$, and define $b=h(a)$. The image of the basis $\{a,\varphi(a),\dots,\varphi^{d-1}(a)\}$ by $h$ is $\{b,\varphi(b),\dots,\varphi^{d-1}(b)\}$, so the orbit of $b$ generates. Conversely, if the orbit of $b$ generates, define $h$ as the automorphism taking $\{a,\varphi(a),\dots,\varphi^{d-1}(a)\}$ to $\{b,\varphi(b),\dots,\varphi^{d-1}(b)\}$. It commutes with $\varphi$ because $M_\varphi$ represents $\varphi$ in both bases.
\end{proof}

\begin{prop}\label{compu}
 If  
 $A$ is nilpotent, one can decide whether $G=A\rtimes_\varphi \Z$ has rank 2 or not.
\end{prop}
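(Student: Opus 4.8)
The plan is to reduce the nilpotent case to the abelian case by induction along the lower central series, using the exact sequences from Section 2 and Corollary~2.7. Write $A_1=A$, $A_{i+1}=[A,A_i]$ for the lower central series; each $A_i$ is characteristic, so $\varphi$ induces $\bar\varphi$ on each quotient $A/A_i$, and we get exact sequences $1\to A_i/A_{i+1}\to A/A_{i+1}\rtimes_{\bar\varphi}\Z\to A/A_i\rtimes_{\bar\varphi}\Z\to 1$. The key observation is that $\mathrm{rk}(G)=2$ is equivalent, by the Corollary, to the existence of $a\in A$ with $\mathrm{Or}(i_a\circ\varphi)=1$, i.e.\ a single element whose $(i_a\circ\varphi)$-orbit generates $A$; so the question is whether some element of $A$ has generating orbit under \emph{some} twist of $\varphi$ by an inner automorphism.

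The first step is a finiteness/boundedness statement: if the orbit of $a$ under $\psi=i_a\circ\varphi$ generates the nilpotent group $A$, then in fact $\{a,\psi(a),\dots,\psi^{N-1}(a)\}$ already generates for some $N=N(A)$ depending only on $A$ (not on $a$ or $\psi$). This should follow because $A$ is finitely generated nilpotent, hence Noetherian and with a bounded-length subnormal series with cyclic factors: each time the partial orbit fails to generate, the subgroup it generates sits strictly inside, and the number of steps in any strictly increasing chain of subgroups is bounded. (One must be slightly careful since subgroups of $A$ need not have uniformly bounded rank, but the subgroups generated by initial orbit segments form an increasing chain, and one can bound its length using a fixed polycyclic series of $A$ together with the fact that a subgroup meeting each factor in a proper subgroup must stabilize within boundedly many steps.) Granting this, the condition ``$\mathrm{rk}(G)\le 2$'' becomes: there exist $a\in A$ and $N\le N(A)$ such that $A=\langle a,(i_a\varphi)(a),\dots,(i_a\varphi)^{N-1}(a)\rangle$.

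The second step is to make this condition decidable. Working in the quotients $A/A_i$, one argues by induction: $G$ has rank $2$ iff the abelianized mapping torus $A/A_2\rtimes_{\bar\varphi}\Z$ has rank $2$ (decidable by Corollary~2.7(3), since $A/A_2\cong\Z^d\oplus(\text{torsion})$ and $i_a$ acts trivially on the abelianization) \emph{and} a lift exists at each successive stage. The point is that at stage $i$, having fixed (finitely many candidate) elements whose orbits generate $A/A_i$, the set of ways to lift to $A/A_{i+1}$ and have the orbit generate is governed by a \emph{linear} problem over the abelian group $A_i/A_{i+1}$: namely, modifying the lift of $a$ by an element $c\in A_i/A_{i+1}$ changes the orbit elements in an affine-linear way (because $\psi$ restricted to the central-type factor $A_i/A_{i+1}$ is linear, and commutators with $c$ land deeper in the series), so whether the lifted orbit segment generates $A_{i}/A_{i+1}$ modulo the part already generated is a system of linear congruences over $\Z$, which is decidable. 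Iterating up to $A_{c+1}=1$ (with $c$ the nilpotency class) gives an algorithm.

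The main obstacle I expect is the uniform bound $N(A)$ in the first step, and more precisely controlling the interaction between the inner twist $i_a$ and the orbit: unlike the abelian case where $i_a$ disappears and Cayley--Hamilton gives the clean bound $d$, here $i_a\circ\varphi$ genuinely depends on $a$, and one needs that its iterates on $A$ are still ``eventually linear'' in a usable sense. The right framework is presumably to filter by the central series, observe that on each factor $A_i/A_{i+1}$ the induced map is just $\bar\varphi$ (the inner part acts trivially on central factors), apply the abelian/Cayley--Hamilton bound factor by factor, and assemble a bound $N(A)=\sum_i d_i$ where $d_i=\mathrm{rk}(A_i/A_{i+1})$. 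Once that boundedness is in hand, the decidability is a finite search combined with solving linear systems over $\Z$, which is routine.
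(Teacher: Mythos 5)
There are two genuine gaps here, and the second one hides the paper's key observation. First, your base case is not actually established: Corollary 2.7(3) computes $vrk(G)$, the \emph{virtual} rank, not the rank, and deciding whether an abelian mapping torus has rank $2$ is precisely the hard part of the proposition. For $A=\Z^d$ it amounts to deciding whether $\varphi$ is conjugate in $GL(d,\Z)$ to its companion matrix, which requires Grunewald's solution of the conjugacy problem in $GL(d,\Z)$; for abelian $A$ with torsion one needs in addition Grunewald's Theorem~C to compute a generating set of the centralizer of $\bar\varphi$, so as to cut the (generally infinite) set of $\bar\varphi$-cyclic vectors down to a single explicit $u$ and reduce the search to the finite preimage of $u$ in $A$ and a finite set of conjugates of $\varphi$. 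Your sketch never invokes any of this, and the same ``infinitely many candidates'' problem reappears at every stage of your proposed induction: the elements of $A/A_i$ with generating orbit form an infinite set, so one cannot ``fix finitely many candidate elements whose orbits generate $A/A_i$'' and then enumerate lifts.

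Second, the induction along the lower central series, and the uniform bound $N(A)$ you identify as the main obstacle, are both unnecessary. The paper reduces the nilpotent case to the abelian case in one step using the classical fact that a subgroup of a finitely generated nilpotent group which surjects onto the abelianization is the whole group (Theorem 2.2.3(d) of Khukhro): if $b\in A/[A,A]$ has generating $\rho$-orbit and $a$ is \emph{any} lift of $b$, then the subgroup generated by the $\varphi$-orbit of $a$ maps onto the abelianization and hence equals $A$. So $A\rtimes_\varphi\Z$ has rank $2$ if and only if its abelianized mapping torus does; there is no lifting obstruction to analyze, no linear congruences, and no need to control the inner twist $i_a$ (which in any case disappears once $A$ is abelian, by Corollary 2.7(1)). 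Finally, the stabilization issue you worry about is handled without any uniform bound: the ascending chain $A_N=\langle \psi^{-N}(a),\dots,\psi^N(a)\rangle$ stabilizes by the maximal condition, and stabilization is detected algorithmically by checking $A_{N+1}=A_N$, at which point $A_N$ is $\psi$-invariant and contains the whole orbit.
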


\begin{proof}  If $A=\Z^d$, one has to decide whether $\phi$ is conjugate to its companion matrix $M_\varphi$ in $GL(d,\Z)$. This is possible because the conjugacy problem is solvable in $GL(d,\Z)$ by \cite{grunewald}.

We now assume   that $A$ is abelian. It fits in an exact sequence $0\to T\to A\to \Z^d\to 0$ with $T$ finite. We denote by $a\mapsto \bar a$ the map $A\to \Z^d$, and by  $h\mapsto \bar h$ the natural epimorphism $Aut(A)\to Aut(\Z^d)$. They each have finite kernel. 

We have to decide whether $A$ may be generated by a single $\varphi$-orbit. We first check whether the matrix of $\bar\varphi$ is conjugate to its companion matrix.
If not, the answer to our question is no. If yes,  \cite{grunewald} yields  a conjugator and therefore an explicit $u\in\Z^d$ whose $\bar\varphi$-orbit  generates $\Z^d$. 

We claim that $A$ may be generated by a single $\varphi$-orbit if and only if there exist $a\in A$ mapping onto $u$, and $\psi\in Aut(A)$ of the form $h\varphi h\m$ with $h\in Aut(A)$ and $[\bar h,\bar\varphi]=1$, such that the $\psi$-orbit of $a$ generates $A$. 

The ``if'' direction is clear. Conversely, suppose that the $\varphi$-orbit of $b$ generates $A$. Then the $\bar\varphi$-orbit of $\bar b$ generates $\Z^d$, so by Lemma \ref{comp} there exists $\theta\in Aut(\Z^d)$ commuting with $\bar \varphi$ and mapping $\bar b$ to $u$.
Let $h$ be any lift of $\theta$ to $Aut(A)$. Defining    $a=h(b)$ and $\psi=h\varphi h\m$, it is easy to check that the $\psi$-orbit of $a$ generates $A$. This proves the claim.

We now explain how to decide whether $a$ and $\psi$ as above exist. Note that $a$ and $\psi$ must belong to   explicit finite sets: $a$   belongs to  the preimage $A_u$ of $u$, and 
$\psi$     belongs to     the preimage $X_\varphi$ of $\bar\varphi$ in $Aut(A)$. 

By Theorem C of \cite{grunewald}, the centralizer of $\bar\varphi$  in $Aut(\Z^d)$ is a finitely generated subgroup and one can compute a finite generating set. The same is true of 
  $D=\{h\in Aut(A)\mid [\bar h,\bar\varphi]=1\}$, so we can list the elements $\psi$ in the orbit $D\varphi$ of $\varphi$ for the action of $D$ on $X_\varphi$ by conjugation.  
  
By the claim proved above,    $A$ may be generated by a single $\varphi$-orbit if and only if there exist 
    $a \in A_u$ and $\psi\in D\varphi$  such that the $\psi$-orbit of $a$ generates $A$. To decide this, we enumerate the pairs $(a,\psi)$ with $a\in A_u$ and $\psi\in D\varphi$. For each pair,     
    we consider the increasing sequence of subgroups $A_N=\langle \psi^{-N}(a), \dots, \psi^{-1}(a), a,\psi(a), \dots\psi^{N}(a)\rangle$. It stabilizes and we check whether $A_N=A$ for $N$ large. 
    
    This completes the proof for $A$ abelian. If $A$ is nilpotent, let $B$ be its abelianization and let $\rho:B\to B$ be the automorphism induced by $\varphi$. If $G_\varphi=A\rtimes_\varphi\Z$ has rank 2, so does its quotient $G_\rho=B\rtimes_\rho\Z$. Conversely, if $G_\rho$ has rank 2, it is generated by $t$ and some $b\in B$ whose $\rho$-orbit generates $B$. Let $a$ be any lift of $b$ to $A$. The subgroup of $A$ generated by the $\varphi$-orbit of $a$ maps surjectively to $B$, so equals $A$ by a classical fact about nilpotent groups (see e.g.\  Theorem 2.2.3(d) of \cite{k}). Thus $G_\varphi$ has rank 2.
    \end{proof}
 
\begin{cor} If $A=\Z^2$ or $A=F_2$, one can compute the rank of $G$. 
\end{cor}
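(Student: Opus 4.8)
The plan is to note that in both cases $rk(G)$ is either $2$ or $3$, and then to decide which. When $A=\Z^2$, the first Corollary of Section~2 gives $rk(G)=1+\Or(\varphi)$, and $1\le\Or(\varphi)\le rk(\Z^2)=2$, so $rk(G)\in\{2,3\}$; by Proposition~\ref{compu} (equivalently, by Lemma~\ref{comp} together with the decidability of the conjugacy problem in $GL(2,\Z)$) one can decide whether $rk(G)=2$. This settles the case $A=\Z^2$, so I concentrate on $A=F_2$.

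For $A=F_2$ the Corollary to the first Lemma of Section~2 gives $rk(G)=1+\min_{a\in F_2}\Or(i_a\circ\varphi)$; since $\Or(i_a\circ\varphi)\le rk(F_2)=2$ and $G$ contains the non-cyclic group $F_2$, again $rk(G)\in\{2,3\}$. I would prove that $rk(G)=2$ if and only if the image $\bar\varphi$ of $\varphi$ in $Out(F_2)=GL(2,\Z)$ is conjugate to its companion matrix; since the latter is decidable by \cite{grunewald}, this makes $rk(G)$ computable. One direction is immediate: if the $(i_a\circ\varphi)$-orbit of some $g$ generates $F_2$, then in the abelianization (where $i_a$ is trivial) the $\bar\varphi$-orbit of $\bar g$ generates $\Z^2$, so $\bar\varphi$ is conjugate to its companion matrix by Lemma~\ref{comp}. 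Put differently, $G$ surjects onto $\Z^2\rtimes_{\bar\varphi}\Z$, which has rank $1+\Or(\bar\varphi)=3$ as soon as $\bar\varphi$ is not conjugate to a companion matrix, forcing $rk(G)=3$.

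The real content is the converse: if $\bar\varphi$ is conjugate to its companion matrix then $rk(G)=2$. Replacing $\varphi$ by an $Aut(F_2)$-conjugate, which alters neither $G$ nor its rank, I may assume $\bar\varphi$ is the companion matrix itself, so that for a suitable basis $\{x,y\}$ of $F_2$ one has $\overline{\varphi(x)}=\bar y$. Now $\varphi(x)$ is a primitive element of $F_2$ (it is the image of the primitive element $x$ under an automorphism) with the same abelianization as $y$; completing $\varphi(x)$ to a basis of $F_2$ and then suitably modifying the other basis element (by a power of $\varphi(x)$, and possibly inverting it), one obtains a basis of $F_2$ containing $\varphi(x)$ and abelianizing to $\{\bar x,\bar y\}$. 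The automorphism carrying $\{x,y\}$ onto this basis acts trivially on homology, hence is inner because $Out(F_2)\to GL(2,\Z)$ is an isomorphism; this yields $\varphi(x)=aya^{-1}$ for some $a\in F_2$. Then $\psi:=i_{a^{-1}}\circ\varphi$ satisfies $\psi(x)=y$, so $F_2=\langle x,\psi(x)\rangle$ is generated by the single $\psi$-orbit of $x$, whence $\min_a\Or(i_a\circ\varphi)=1$ and $rk(G)=2$. The main obstacle — and the only step that uses that the group is $F_2$ rather than $F_d$ with $d\ge3$ — is precisely this lifting: for larger $d$ one cannot deduce from $\Z^d$ being generated by one $\varphi$-orbit that $F_d$ is, which is exactly why the corresponding question for $F_d$ is left open.
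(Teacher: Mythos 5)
Your proof is correct. For $A=\Z^2$ it coincides with the paper's (rank is $2$ or $3$, and rank $2$ is decidable by Proposition~\ref{compu}). For $A=F_2$ you and the paper share the same skeleton — reduce to the case where the image $\bar\varphi$ in $Out(F_2)\cong GL(2,\Z)$ is the companion matrix, using that conjugating $\varphi$ in $Aut(F_2)$ changes neither $G$ nor its rank — but the final step is executed differently. The paper simply observes that the standard lift $a\mapsto b$, $b\mapsto a^{\pm1}b^{n}$ of the companion matrix represents the same outer class as $\varphi$ (injectivity of $Out(F_2)\to GL(2,\Z)$), so $G$ is isomorphic to the explicit mapping torus $\langle a,b,t\mid tat^{-1}=b,\ tbt^{-1}=a^{\pm1}b^{n}\rangle$, visibly generated by $a$ and $t$. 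You instead stay with $\varphi$ itself and prove by hand the fact the paper sidesteps: a primitive element of $F_2$ is determined up to conjugacy by its image in $\Z^2$ (your basis-completion and Nielsen adjustment, followed by the observation that an automorphism trivial on homology is inner). This yields $\varphi(x)=aya^{-1}$, hence $\psi=i_{a^{-1}}\circ\varphi$ with $\Or(\psi)=1$ and the explicit generating pair $(x,a^{-1}t)$. Your route is longer but makes the generating pair and the role of the conjugacy class of $\varphi(x)$ explicit; the paper's is shorter because it transfers the whole problem to a preferred representative of the outer class rather than correcting $\varphi$ by an inner automorphism. Your closing remark about why the argument is confined to $d=2$ matches the open question raised in the introduction.
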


\begin{proof}  The rank is 2 or 3, so this is clear from the proposition  if $A=\Z^2$. 

Recall that the natural map $ Out(F_2)\to Out(\Z^2)=Aut(\Z^2)$ is an isomorphism (both groups are isomorphic to $GL(2,\Z)$).
Given $G=F_2\rtimes_\varphi \Z$,   let $\rho$ be the image of $\varphi$ in $Aut(\Z^2)$. 
Consider $G_\rho=\Z^2\rtimes_{\rho} \Z$. We prove that $G$ and $G_\rho$ have the same rank. 

Clearly $2\le rk(G_\rho)\le rk(G)\le3$. If $G_\rho$ has rank 2, Lemma \ref{comp} lets us assume that $\rho$ is of the form $ \left(\begin{array}{rr} 0&\pm1\\ 1&n  \end{array}\right)$. Since $G$ only depends on the class of $\varphi$ in $Out(F_2)$, it is isomorphic to $$\langle a,b,t\mid tat\m=b,tbt\m=a^{\pm1}b^n\rangle,$$ so has rank 2.
\end{proof}  

\section{Nielsen equivalence}

  \begin{prop} \label {gp} Suppose  
that  $A$ is abelian and $G=A\rtimes_\varphi\Z$ has rank 2. 
  \begin{enumerate}
  
\item  Any  generating pair of $G$  is Nielsen equivalent to a pair $(a,t)$ with $a\in A$.
  
\item  Two generating pairs  $(a,t)$ and $(b,t)$, with $a,b\in A$, are Nielsen equivalent if and only if $b$ belongs to the $\varphi$-orbit of $a$ or $a\m$.
\end{enumerate}
 \end{prop}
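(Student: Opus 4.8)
The plan is to dispose of both parts at once by showing that \emph{every} generating pair can be moved, by Nielsen operations, into the form $(v,t)$ with $v\in A$ (this proves part 1 and also the ``if'' half of part 2), and then to prove the ``only if'' half of part 2 by pushing a Nielsen equivalence down to the first homology of the kernel of $G\to\Z$.

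\emph{Reduction to $(v,t)$.} Starting from a generating pair $(g_1,g_2)$, I would look at the images of $g_1,g_2$ under $\epsilon\colon G\to\Z$: they generate $\Z$, so by the Euclidean algorithm (Nielsen moves in the quotient, lifted arbitrarily) the pair becomes Nielsen equivalent to $(v,at)$ with $v,a\in A$. By the first Lemma of Section~2 (the $(i_a\circ\varphi)$-orbits of $a_1,\dots,a_k$ generate $A$ iff $a_1,\dots,a_k,at$ generate $G$), and since $A$ is abelian so $i_a=\mathrm{id}$, the $\varphi$-orbit of $v$ already generates $A$. Now two families of Nielsen moves keep us among pairs whose first entry is in $A$ and second entry is in $At$: conjugating the first entry by $(at)^{\pm1}$ (a composition of elementary moves), which turns $v$ into $\varphi^{\pm1}(v)$; and right-multiplying the second entry by $v^{\pm1}$, which turns $at$ into $a\,\varphi(v)^{\pm1}t$. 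Composing these (conjugate $v$ to $\varphi^{m-1}(v)$, multiply, conjugate back) lets one replace $a$ by $a\,\varphi^m(v)^{\pm1}$ for any $m$; since the $\varphi$-orbit of $v$ generates $A$, iterating absorbs $a$ entirely and we reach $(v,t)$, which is part~1. Specialising to $a=0$, together with $v\mapsto v^{-1}$, gives $(a,t)\sim(\varphi^n(a),t)$ and $(a,t)\sim(\varphi^n(a^{-1}),t)$ for all $n$ — the ``if'' direction of part~2.

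\emph{The ``only if'' direction.} Suppose $(a,t)$ and $(b,t)$ are Nielsen equivalent generating pairs, $a,b\in A$. Write $F$ free on $x,y$, and let $\pi,\pi'\colon F\to G$ be the epimorphisms with $(\pi(x),\pi(y))=(a,t)$, $(\pi'(x),\pi'(y))=(b,t)$; Nielsen equivalence means $\pi'=\pi\alpha$ for some $\alpha\in Aut(F)$. Set $\beta=\epsilon\pi\colon F\to\Z$ (so $\beta(x)=0,\ \beta(y)=1$). Since $\epsilon(a)=\epsilon(b)=0$ we get $\beta\alpha=\beta$, so $\alpha$ preserves $N:=\ker\beta$, which is free on $x_n:=y^nxy^{-n}$, $n\in\Z$. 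The abelianization $\Lambda:=N^{\mathrm{ab}}$ is free of rank~$1$ over $\Z[s^{\pm1}]$, where $s$ is induced by conjugation by $y$ and sends $x_n\mapsto x_{n+1}$. The crucial point I would emphasize is that the induced automorphism $A_\alpha$ of $\Lambda$ is $\Z[s^{\pm1}]$-\emph{linear}: $\alpha$ conjugates ``conjugation by $y$'' to ``conjugation by $\alpha(y)$'', and $\alpha(y)y^{-1}\in N$ induces an inner automorphism of $N$, hence acts trivially on $\Lambda$, so the two coincide on $\Lambda$. Thus $A_\alpha$ is multiplication by a unit of $\Z[s^{\pm1}]$, i.e.\ $A_\alpha(x_0)=\pm x_n$ for some $n$. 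Finally $\pi|_N\colon N\to A$ is onto and sends $x_n\mapsto\varphi^n(a)$, so it induces a $\Z[s^{\pm1}]$-linear surjection $\bar\pi\colon\Lambda\to A$ (with $s$ acting as $\varphi$) carrying $x_0$ to $a$, and
\[
b=\pi'(x)=\pi(\alpha(x))=\bar\pi(A_\alpha(x_0))=\bar\pi(\pm x_n)=\pm\varphi^n(a),
\]
so $b$ is in the $\varphi$-orbit of $a$ (sign $+$) or of $a^{-1}$ (sign $-$).

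\emph{Expected obstacle.} Part~1 and the ``if'' direction are essentially bookkeeping; the only care needed is to check that the conjugations used really are compositions of elementary Nielsen moves, and that the ``absorb $a$'' step terminates (it does, as a finite word in the $\varphi^m(v)$). The content is in the ``only if'' direction, and the difficulty is conceptual rather than computational: one has to find the right invariant of a Nielsen equivalence respecting $G\to\Z$, namely the $\Z[s^{\pm1}]$-module $H_1$ of the fiber group, and see that such an $\alpha$ only conjugates the monodromy $s$ within the fiber — hence acts $\Z[s^{\pm1}]$-linearly after abelianizing — which is exactly what rigidly forces $b\in\{\pm\varphi^n(a)\}$. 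It is worth noting that this argument uses only that $A$ is abelian, not $A\cong\Z^d$ or Cayley--Hamilton.
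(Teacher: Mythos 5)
Your proof is correct. Part 1 and the ``if'' half of part 2 are essentially the paper's argument: the same two families of moves (conjugating the $A$-entry by the other generator to apply $\varphi^{\pm1}$, and right-multiplying the other generator by elements of the $\varphi$-orbit to absorb the $A$-part), so nothing to compare there. The ``only if'' half, however, is genuinely different. The paper quotes Heusener--Weidmann and, for completeness, argues inside $F_2$: writing $b=w(a,t)$ with $w$ primitive of zero $t$-exponent sum, it invokes the classification of primitive elements of $F_2$ (a primitive word with zero exponent sum in one generator is conjugate to the other generator or its inverse) to conclude that $b$ is conjugate to $a^{\pm1}$ in $G$, hence equals $\varphi^n(a^{\pm1})$ since $A$ is abelian and normal. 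You instead pass to the Alexander-module picture: the automorphism $\alpha$ preserves $N=\ker(F\to\Z)$, acts $\Z[s^{\pm1}]$-linearly on $N^{\mathrm{ab}}\cong\Z[s^{\pm1}]$ because $\alpha(y)y^{-1}\in N$ acts trivially after abelianization, and must therefore be multiplication by a unit $\pm s^n$; pushing forward along the $\Z[s^{\pm1}]$-linear surjection onto $A$ gives $b=\varphi^n(a^{\pm1})$. Both arguments are complete and both use only that $A$ is abelian. Yours avoids the $F_2$-specific fact about primitive elements (replacing it by the computation of the units of $\Z[s^{\pm1}]$, which is arguably more self-contained), while the paper's is shorter and yields the slightly stronger intermediate statement that $b$ is actually conjugate to $a^{\pm1}$ in $G$; note that your route is still tied to rank 2, since for higher-rank tuples the module $\Lambda$ would no longer be cyclic and the unit argument would not apply.
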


\begin{proof}   Given $x,y\in A$, and $n$, write
$$
(x,ty)\sim((ty)^nx(ty)^{-n},ty)=(\varphi^n(x),ty)
$$
and
$$
(x,ty)\sim (\varphi^n(x),ty)\sim(\varphi^n(x),ty\varphi^n(x))\sim( x,ty\varphi^n(x)).
$$

Every generating pair is equivalent to some $(a,ty)$, with the $\varphi$-orbit of $a$ generating $A$. But $(a,ty)\sim ( a,ty\varphi^n(a))$ so by an easy induction $(a,ty)\sim (a,t)$. This proves 1.

If $b=\varphi^n(a^\varepsilon)$ with $\varepsilon=\pm1$, then $(b,t)=(\varphi^n(a^\varepsilon),t)=(t^na^\varepsilon t^{-n},t)\sim(a,t)$. The converse follows from Theorem 2.1 of \cite{hw}.
We give a proof for completeness. If $(b,t)\sim(a,t)$, we can write $b=w(a,t)$ with $w$   a primitive word with exponent sum 0 in $t$. Such a word is conjugate to $a^{\pm1}$ in the free group $F(a,t)$, so $b$ is conjugate to $a^{\pm1}$ in $G$. Since $A$ is abelian, $b$ belongs to the $\varphi$-orbit of $a^{\pm1}$. 
\end{proof}

\begin{rem} More generally, if $A$ is abelian, any generating set of $G$  is Nielsen equivalent to a set of the form $\{a_1,\dots,a_k, t\}$.
\end {rem}

\begin{rem} \label{heis} The proposition does not extend to nilpotent groups. Let $A$ be the Heisenberg group $\langle a,b,c\mid [a,b]=c, [a,c]=[b,c]=1\rangle$. Let $\varphi$ map $a$ to $ab$ and $b$ to $b$. 
The generating pairs $(a,t)$ and $(ac\m,t)$ are Nielsen equivalent  (even conjugate) but $ac\m$ does not belong to the $\varphi$-orbit of $a^{\pm1}$.
Moreover, $(a, tc)$ is a generating pair which is not Nielsen equivalent to a pair $(x,t)$  with $x\in A$. 
Indeed, if it were, then $t$ would be conjugate to $tca^k$ for some $k\in\Z$ by \cite{hw}. Counting  exponent sum in $a$ yields $k=0$. But $t$ and $tc$ are not conjugate. 
\end {rem}

\begin{cor} Let $A=\Z^d$. If $G$  
has rank 2, the number of Nielsen classes of generating pairs is equal to the index of the  group generated by $\varphi$ and $-Id$ in the centralizer of $\varphi$ in $GL(d,\Z)$.
\end{cor}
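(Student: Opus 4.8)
The plan is to translate the count into a count of cosets inside the centralizer $C$ of $\varphi$ in $GL(d,\Z)$, using Proposition \ref{gp} together with Lemma \ref{comp}. Since $G$ has rank $2$, Lemma \ref{comp}(1) supplies an element $a_0\in\Z^d$ whose $\varphi$-orbit is a basis of $\Z^d$; write $S$ for the set of all $b\in\Z^d$ whose $\varphi$-orbit generates $\Z^d$. By Lemma \ref{comp}(2), the map $h\mapsto h(a_0)$ sends $C$ onto $S$, and I would first check that it is a bijection: if $h(a_0)=h'(a_0)$, then $h\m h'$ lies in $C$ and fixes $a_0$, hence fixes $\varphi^n(a_0)$ for all $n$, hence fixes a generating set of $\Z^d$ and equals $Id$.

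Next I would invoke Proposition \ref{gp}: part (1) says every generating pair of $G$ is Nielsen equivalent to some $(b,t)$ with $b\in S$, and part (2) says $(b,t)\sim(b',t)$ (for $b,b'\in S$) if and only if $b'$ lies in the $\varphi$-orbit of $b$ or of $b\m=-b$. Transporting this equivalence relation to $C$ via the bijection above: with $b=h(a_0)$ and $b'=h'(a_0)$, one has $\varphi^n(b)=(\varphi^n h)(a_0)$ and $\varphi^n(-b)=\bigl((-Id)\,\varphi^n h\bigr)(a_0)$; since $-Id$ is central in $GL(d,\Z)$, the subgroup $\Gamma=\langle\varphi,-Id\rangle=\{\pm\varphi^n : n\in\Z\}$ lies in $C$, and $\{\,\pm\varphi^n h : n\in\Z\,\}$ is exactly the coset $\Gamma h$. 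By the injectivity just established, $(b',t)\sim(b,t)$ if and only if $h'\in\Gamma h$, so the Nielsen classes of generating pairs are in bijection with the cosets of $\Gamma$ in $C$. Because $\varphi$ is central in $C$ and $-Id$ is central in $GL(d,\Z)$, the subgroup $\Gamma$ is central in $C$, so left and right cosets coincide and the index $[C:\Gamma]$ is unambiguous; it is the desired number.

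I do not expect a serious obstacle. The one point that genuinely needs care is the injectivity of $h\mapsto h(a_0)$ — the observation that an element of $C$ fixing $a_0$ must fix the whole generating $\varphi$-orbit of $a_0$ and therefore be trivial; granting that, the corollary is just a transcription of Proposition \ref{gp}(2) through this bijection, plus the bookkeeping of the central factor $-Id$. As a sanity check, this is consistent with the preceding theorem: $[C:\Gamma]$ is infinite exactly when $[C:\langle\varphi\rangle]$ is, since $\Gamma/\langle\varphi\rangle$ has order $1$ or $2$.
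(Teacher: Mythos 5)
Your proof is correct and follows essentially the same route as the paper: both identify the set of valid $b$'s with the centralizer of $\varphi$ via Lemma \ref{comp}(2) (the paper uses the inverse map $b\mapsto\psi_b$ to your $h\mapsto h(a_0)$) and then read off the Nielsen classes as cosets of $\langle\varphi,-Id\rangle$ using Proposition \ref{gp}. Your explicit check of injectivity is a detail the paper leaves implicit, but it is not a different argument.
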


\begin{proof}   By Proposition \ref{gp} we need only  consider generating pairs of the form $(a,t)$. Fix one. To any $b\in \Z^d$ such that  $(b,t)$ generates $G$ we associate the automorphism $\psi_b$ of $\Z^d$ taking the basis $\{a,\varphi(a),\dots,\varphi^{d-1}(a)\}$ to the basis $\{b,\varphi(b),\dots,\varphi^{d-1}(b)\}$. By Lemma \ref{comp}, the image of this map $b\mapsto \psi_b$ is the centralizer of $\phi$ in $GL(d,\Z)$. By Proposition \ref{gp}, $(b,t)\sim(a,t)$ if and only if $\psi_b$ is $\pm \varphi^n$ for some $n\in \Z$. 
\end{proof}

  \begin{example*}    The number of Nielsen classes of generating pairs is always finite if $d=2$. If $\phi=
  \left(\begin{array}{rrrr} 0&1&0&0\\ 1&1&0&0\\ 0 &0&0&1\\  0&0&1&0  \end{array}\right)$, this number is infinite.
  \end{example*}
  
\section{Powers}

Fix $\varphi\in GL(d,\Z)$.
Say that $v\in \Z^d$ is \emph{$\varphi$-cyclic} if 
  its $\varphi$-orbit generates $\Z^d$, or equivalently if $\{v,\phi(v),\dots,\phi^{d-1}(v)\}$ is a basis of $\Z^d$. 
  The existence of such a $v$   is equivalent to 
  $\phi$ being conjugate to its companion matrix, and also to $G$ having  rank 2.  If  $v$   is $\varphi^n$-cyclic  for some $n\ge2$, it  is   $\varphi$-cyclic since its $\varphi^n$-orbit is contained in its $\varphi$-orbit. 
  
  If $v$ is $\varphi$-cyclic, we denote by $\delta_n$ 
   the index of the subgroup of $\Z^d$ generated by the $\varphi^n$-orbit of $v$.
    It   does not depend on the choice of $v$ since $\varphi$ always has matrix $M_\phi$ in the basis $\{v,\varphi(v),\dots,\varphi^{d-1}(v)\}$. Also note that  $\delta_1=1$. The group $G_n=\Z^d\rtimes_{\varphi^n}\Z$ has rank 2 (equivalently, $\varphi^n$ is conjugate to its companion matrix) if and only if $\delta_n=1$.

\begin{thm} \label{po2}
If $\varphi\in GL(2,\Z)$ has infinite order, the rank of  $G_n=\Z^2\rtimes_{\varphi^n}\Z$   is 3 for all $n\ge3$. 
\end{thm}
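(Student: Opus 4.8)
The plan is to work with a $\varphi$-cyclic vector $v$ (which exists since $G=\Z^2\rtimes_\varphi\Z$ has rank $2$ by hypothesis on $d=2$ — though actually we must first handle the case where $\varphi$ itself is not conjugate to a companion matrix, in which case $\mathrm{rk}(G_1)=3$ and every $G_n$ has rank $\ge 3$ anyway, since rank $2$ for $G_n$ would force rank $2$ for $G_1$ as $\varphi^n$-cyclic implies $\varphi$-cyclic). So assume $\varphi$ is conjugate to $M_\varphi$, pick $v$ $\varphi$-cyclic, and study the index $\delta_n$ of the subgroup of $\Z^2$ generated by $\{v,\varphi^n(v)\}$ — equivalently, by the whole $\varphi^n$-orbit, which by Cayley–Hamilton applied to $\varphi^n$ is generated by $v$ and $\varphi^n(v)$. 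In the basis $\{v,\varphi(v)\}$ the matrix of $\varphi$ is $M_\varphi=\left(\begin{smallmatrix}0&c\\1&s\end{smallmatrix}\right)$ with $c=\pm 1$, $s=\mathrm{tr}\,\varphi$. Then $\delta_n=|\det(e_1\mid \varphi^n e_1)|$ where $\varphi^n e_1$ is the first column of $M_\varphi^n$, i.e.\ $\delta_n$ is the absolute value of the $(2,1)$-entry of $M_\varphi^n$.

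The key computation is that the $(2,1)$-entry of $M_\varphi^n$ satisfies the linear recurrence coming from the characteristic polynomial $X^2-sX+c$: writing $u_n$ for this entry, we have $u_{n+2}=s\,u_{n+1}-c\,u_n$, with $u_0=0$, $u_1=1$. This is a classical Lucas-type sequence. I want to show that $|u_n|\ge 2$ for all $n$ large enough, which gives $\delta_n\ge 2$, hence $G_n$ has rank $\ge 3$ (and exactly $3$ since $d+1=3$). The infinite-order hypothesis on $\varphi$ enters precisely here: $\varphi$ has infinite order in $GL(2,\Z)$ iff $|s|\ge 3$, or $s=\pm 2$ with $c$ of the appropriate sign, i.e.\ iff the eigenvalues are not roots of unity. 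The finitely many exceptional $(s,c)$ with $\varphi$ of finite order are excluded. One can then argue directly: if $u_n=0$ for some $n\ge 1$ then, because $\gcd(u_n,u_{n+1})$ is constant $=1$ and the recurrence is reversible (as $c=\pm1$), the sequence would be periodic, forcing $\varphi$ to have finite order — contradiction. And if $|u_n|=1$ infinitely often, a similar periodicity/boundedness argument (or direct analysis of solutions of $X^2-sX+c$ with $|X|>1$ forcing $|u_n|\to\infty$ when $|s|\ge 3$; the borderline $|s|=2$ cases give $u_n$ linear in $n$, still $\to\infty$) yields the same contradiction. So $\delta_n\to\infty$, in particular $\delta_n\ge 2$ for $n\ge 3$; the bound $n\ge 3$ is then checked by hand on the finitely many small cases, using that $u_1=1$, $u_2=s$, and $|s|\le 2$ would already put $\varphi$ in or near the finite-order locus.

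I expect the main obstacle to be pinning down the exact threshold $n\ge 3$ rather than "$n$ large": one must verify that for every infinite-order $\varphi\in GL(2,\Z)$ one genuinely has $\delta_3\ge 2$, i.e.\ rule out $\delta_3=1$. Since $\delta_2=|u_2|=|s|$, the case $\delta_2=1$ means $|s|=1$, and then $u_3=s^2-c=1-c\in\{0,2\}$; if $c=1$ this gives $u_3=0$, which corresponds to a finite-order matrix (trace $\pm1$, det $1$: order $6$), so in the infinite-order case with $|s|=1$ we must have $c=-1$, whence $u_3=2$ and $\delta_3=2$ as desired. The cases $|s|=2$ and $|s|\ge 3$ are easier: $u_2=s$ already has absolute value $\ge 2$, so $\delta_2\ge 2$ and a fortiori $\delta_n\ge 2$ for $n\ge 2$ — but one must check monotonicity or at least that $\delta_n$ does not drop back to $1$, which follows from showing $\delta_m\mid \delta_n$ whenever $m\mid n$ (a divisibility property of Lucas sequences), or more simply from $|u_n|\ge|u_2|$ for all $n\ge 2$ when $|s|\ge 2$, proved by induction using $u_{n+1}=s u_n-c u_{n-1}$ and a sign-tracking argument. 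Assembling these small-case checks into a clean uniform statement is the fiddly part; the conceptual content (Cayley–Hamilton reduces everything to one Lucas sequence, and infinite order forces unboundedness) is straightforward.
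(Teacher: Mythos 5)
Your reduction is the same as the paper's: rank $2$ for $G_n$ forces a $\varphi^n$-cyclic, hence $\varphi$-cyclic, vector $v$, and in the basis $\{v,\varphi(v)\}$ the index $\delta_n$ is the absolute value of the $(2,1)$-entry $u_n$ of $M^n$ --- which is literally the paper's sequence $c_n$ defined by $M^n=c_nM+d_nI$, satisfying the characteristic recurrence. Where you genuinely diverge is in bounding this sequence: the paper factors $c_n$ as $\prod_{k=1}^{n-1}(\tau-2\cos\frac{k\pi}n)$ or $\prod_{k=1}^{n-1}(\tau-2i\cos\frac{k\pi}n)$ (Chebyshev polynomials) and reads off $|c_n|>1$ factor by factor, whereas you run an elementary induction: $|u_{n+1}|\ge |s|\,|u_n|-|u_{n-1}|$ makes $|u_n|$ non-decreasing once $|s|\ge2$, and the remaining infinite-order case $|s|=1$, $\det\varphi=-1$ is Fibonacci. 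Both work; yours trades the trigonometric product identity for a case split, and is arguably more self-contained. Three small repairs are needed. First, a sign: $\left(\begin{smallmatrix}0&c\\1&s\end{smallmatrix}\right)$ has characteristic polynomial $X^2-sX-c$, so since your later formulas treat $c$ as the determinant, the companion matrix should be $\left(\begin{smallmatrix}0&-c\\1&s\end{smallmatrix}\right)$. Second, ``infinite order iff the eigenvalues are not roots of unity'' is false for the companion matrix of $(X\mp1)^2$ (infinite order, eigenvalues $\pm1$); this is harmless because your $|s|=2$ cases are covered by the monotonicity argument regardless of $c$, but the phrase ``$s=\pm2$ with $c$ of the appropriate sign'' should just be ``$|s|=2$''. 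Third, in the case $|s|=1$, $\det=-1$ you verify $\delta_3=2$, but the theorem needs $\delta_n\ge2$ for \emph{all} $n\ge3$; this is immediate since $|u_n|$ is then the Fibonacci sequence ($|u_{n+2}|=|u_{n+1}|+|u_n|$ after tracking the alternating signs when $s=-1$), but it must be said --- the appeal to ``$\delta_n\to\infty$'' only gives the bound for $n$ large, and the hand-wavier periodicity/Skolem-style remarks earlier in your sketch are not needed once the two concrete inductions are in place.
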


\begin{proof}
  If $G_n$ has rank 2 for some $n$, there exists a $\varphi^n$-cyclic element $v$. Such a $v$ is also $\varphi$-cyclic.  In the basis $\{v,\varphi(v)\}$, the matrix of $\varphi$ has the form 
 $ M=\left(\begin{array}{rr} 0&\varepsilon \\ 1&\tau  \end{array}\right)$ with $\varepsilon=\pm1$. If finite, the index 
$\delta_n$  is the absolute value of the determinant $c_n$ of the matrix expressing the family $\{v,\varphi^n(v)\}$ in the basis $\{v,\varphi(v)\}$. We prove the theorem by showing   $ | c_n | >1$ for $n\ge3$.
 
 The number $c_n$ is determined by the equation $M^n=c_n M+ d_n I$. It follows from the Cayley-Hamilton theorem that the sequence $c_n$ satisfies the recurrence relation $c_{n+2}-\tau c_{n+1}-\varepsilon c_n=0$.  
 
  If $\varepsilon=-1$ one has $$c_n=\prod_{k=1}^{n-1}(\tau-2\cos\frac{k\pi}n) $$
 because $c_n$ is a monic polynomial  of degree $n-1$ in $\tau$ which vanishes for $\tau=2\cos\frac{k\pi}n$ 
  (one also has $c_n=U_{n-1}(\tau/2)$, with $U_{n-1}$ a Chebyshev polynomial of the second kind).

 If  $\varepsilon=1$ one has 
 $$c_n=\prod_{k=1}^{n-1}(\tau-2i\cos\frac{k\pi}n) .$$

 Since $\varphi$ is assumed to have infinite order, one has $\tau\ne0$ if $\varepsilon=1$, and $ | \tau | \ge2$ if $\varepsilon=-1$. One checks that $ | c_n | >1$ for $n\ge3$ (for $n\ge 2$ if $\varepsilon=-1$).
 
 \end{proof}

\begin{thm} \label{pow}
Suppose that $\varphi\in GL(d,\Z)$ has infinite order. 
\begin{enumerate}
\item There exists $n_0$ such that $G_n=\Z^d\rtimes_{\varphi^n}\Z$ has rank $\ge3$ for every $n\ge n_0$. Equivalently: $\varphi^n$ is not conjugate to its companion matrix for $n\ge n_0$.
\item 
More precisely, the minimum index of 2-generated subgroups of $G_n$ goes to infinity with $n$.
\end{enumerate}
\end{thm}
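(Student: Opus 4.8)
The plan is to establish assertion (1) and then deduce assertion (2) from it by a pigeonhole argument.

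For (1): if $\varphi$ has no $\varphi$-cyclic vector, then neither does any $\varphi^n$ (a $\varphi^n$-orbit lies in a $\varphi$-orbit), so no $\varphi^n$ is conjugate to its companion matrix and $G_n$ always has rank $\ge 3$; there is nothing to prove. So assume $\varphi$ has a $\varphi$-cyclic vector $v$, work in the basis $\{v,\varphi(v),\dots,\varphi^{d-1}(v)\}$ in which $\varphi=M_\varphi$, and set $C_n=[\,e_1\mid M_\varphi^ne_1\mid\cdots\mid M_\varphi^{(d-1)n}e_1\,]$, so that $\delta_n=|\det C_n|$, $\det C_1=1$, and $G_n$ has rank $2$ iff $\det C_n=\pm1$. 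Since the entries of $M_\varphi^k$ satisfy the recurrence given by the characteristic polynomial of $\varphi$, the sequence $u_n:=\det C_n$ is the value at $n$ of an exponential polynomial, i.e.\ an integer linear recurrence sequence, whose characteristic roots are products of eigenvalues of $\varphi$. By the Skolem–Mahler–Lech theorem \cite{rs}, the zero sets of $u_n-1$ and $u_n+1$ are finite unions of arithmetic progressions. So if $\delta_n=1$ for infinitely many $n$, then $u_n\equiv\varepsilon$ for a fixed sign $\varepsilon$ along an infinite progression $\mathcal A=\{a+bk:k\ge0\}$; restricting, $k\mapsto v_k:=u_{a+bk}$ is a bounded ($\pm1$-valued) integer linear recurrence sequence, hence eventually periodic, hence all its characteristic roots are roots of unity and all polynomial coefficients in its exponential-polynomial expansion are constant. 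One must show this is impossible when $\varphi$ has infinite order.

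If $\varphi$ is \emph{not} semisimple it has a nontrivial Jordan block (in particular infinite order), and a confluent-Vandermonde computation gives $u_n=n^{E}\cdot(\text{exponential sum})$ with $E=\sum_j\binom{m_j}{2}\ge1$, the $m_j$ being the eigenvalue multiplicities. Restricting to $\mathcal A$ keeps a polynomial factor of degree $E\ge1$ attached to each surviving characteristic root of $v_k$, and if no root survives then $v_k\equiv0$; either way $v_k$ is not a bounded sequence all of whose coefficients are constant — a contradiction. If $\varphi$ \emph{is} semisimple then (as $\varphi$ is cyclic) its eigenvalues $\mu_1,\dots,\mu_d$ are simple; if moreover $\varphi$ has infinite order, then not all $|\mu_i|$ equal $1$, for otherwise the conjugates of each $\mu_i$, being eigenvalues, all lie on the unit circle, so $\mu_i$ is a root of unity by Kronecker's theorem and $\varphi$ has finite order. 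Since $\prod_i|\mu_i|=|\det\varphi|=1$, some $|\mu_i|>1$; grouping the $\mu_i$ by absolute value, a Chebyshev-type (rearrangement) inequality then shows that the maximal modulus $\rho^\ast$ among the characteristic roots of $u_n$ satisfies $\rho^\ast>1$ (this root being obtained by picking, in each $2\times2$ block contributing to $\det C_n$, the eigenvalue of larger absolute value). As $v_k$ is eventually periodic all its characteristic roots have modulus $1$, so the characteristic roots of $u_n$ of modulus $\rho^\ast$ must cancel along $\mathcal A$.

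This cancellation is the heart of the argument, and the hard part of the proof. It forces two distinct maximal-modulus pair-choice products differing by a nontrivial $b$-th root of unity, equivalently a multiplicative relation $\prod_i\mu_i^{e_i}=(\text{root of unity})$ whose exponent vector $(e_i)$ is supported on the groups of eigenvalues of equal absolute value. One must rule this out for $\varphi$ of infinite order: the point is that any such relation makes $\mu_i^n=\mu_j^n$ hold along $\mathcal A$ for some $i\neq j$ (a further application of Skolem–Mahler–Lech to $v_k$, together with an induction peeling off the dominant modulus), whence $\varphi^n$ is not $\Q$-cyclic and $\delta_n=\infty$ on $\mathcal A$ — contradicting $\delta_n=1$ there. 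When the $|\mu_i|$ are pairwise distinct — in particular when $d=2$, which is Theorem \ref{po2} — the maximal-modulus characteristic root is unique, no cancellation can occur, and the argument is immediate; the general case requires the analysis above. This proves (1).

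For (2), suppose the minimal index of $2$-generated finite-index subgroups of $G_n$ stays bounded along a sequence $n\to\infty$: there is $B$ and infinitely many $n$ admitting a $2$-generated $H\le G_n$ with $[G_n:H]\le B$. Such an $H$ meets $\Z^d$ in a sublattice $L$, maps onto $m\Z$ with $[\Z^d:L]\cdot m=[G_n:H]\le B$, and is isomorphic to $L\rtimes_{\varphi^{nm}|_L}\Z$; being $2$-generated, $\varphi^{nm}|_L$ is conjugate to its companion matrix in $GL(L)\cong GL(d,\Z)$ (Lemma \ref{comp}). Since there are only finitely many sublattices of index $\le B$ and finitely many $m\le B$, one fixed $L$ and one fixed $m$ occur for infinitely many such $n$. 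The stabilizer $\{k:\varphi^k(L)=L\}$ is a subgroup $k_0\Z$ with $k_0\ge1$ (as $\varphi$ permutes the finitely many sublattices of index $[\Z^d:L]$), and $k_0\mid nm$ for those $n$; put $\psi:=\varphi^{k_0}|_L$, which has infinite order because $\varphi$ does. Then $\psi^{nm/k_0}=\varphi^{nm}|_L$ is conjugate to its companion matrix for infinitely many positive integers $nm/k_0\to\infty$, contradicting (1) applied to $\psi$. (If $G_n$ eventually has no $2$-generated finite-index subgroup, (2) holds vacuously.) The principal obstacle throughout is the cancellation step in the semisimple case.
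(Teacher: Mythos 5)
Your overall strategy coincides with the paper's: reduce to a cyclic vector, study the determinant $u_n=\det C_n$ as a linear recurrence, apply Skolem--Mahler--Lech to confine the bad $n$'s to an arithmetic progression $\mathcal A$, and derive a contradiction from the exponential-polynomial shape of $u_n$. Your treatment of the non-semisimple case (the factor $n^{E}$ with $E\ge1$ forces a non-constant polynomial coefficient, incompatible with $v_k=\pm1$ being eventually periodic) is sound, and your deduction of (2) from (1) --- finitely many sublattices $L$ of index $\le B$, finitely many $m$, the stabilizer $k_0\Z$ of $L$, and (1) applied to $\psi=\varphi^{k_0}|_L$ --- is a correct alternative to the paper's route, which instead proves the stronger statement that $\delta_n\to\infty$ and reads off both assertions from it.

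There is, however, a genuine gap exactly where you flag ``the heart of the argument'': the semisimple case in which several eigenvalues share the maximal modulus. You correctly reduce to showing that the sum of the maximal-modulus terms of $u_n$ cannot vanish along $\mathcal A$, but your proposed resolution --- that a multiplicative relation $\prod_i\mu_i^{e_i}=(\text{root of unity})$ ``makes $\mu_i^n=\mu_j^n$ hold along $\mathcal A$ for some $i\ne j$'' --- is asserted rather than proved, and the implication fails for general multiplicative relations (a relation such as $\mu_1\mu_2=\mu_3\mu_4$ forces nothing of the sort); the parenthetical appeal to ``a further application of Skolem--Mahler--Lech'' and ``an induction peeling off the dominant modulus'' does not supply an argument. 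The missing idea is structural: writing $u_n$ (up to a nonzero constant) as the Vandermonde product $\prod_{1\le k<m\le d}\bigl(\lambda_m^n-\lambda_k^n\bigr)$, the sum of the maximal-modulus terms in its expansion is itself a product, namely $\prod_{k<m}E_{k,m}$ with $E_{k,m}=\lambda_m^n-\lambda_k^n$ when $|\lambda_k|=|\lambda_m|$ and $E_{k,m}=\lambda_m^n$ when $|\lambda_m|>|\lambda_k|$. Its vanishing therefore forces some factor $\lambda_m^n-\lambda_k^n$ to vanish, which annihilates the whole determinant $u_n$ and contradicts $u_n=\pm1$ on $\mathcal A$. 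Without this factorization, or a genuine substitute for it, the cancellation is not ruled out and the proof of (1) is incomplete precisely in the case of distinct eigenvalues with coinciding moduli.
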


 Note that there are arbitrarily large values of $n$ for which the rank of $G_n$ is $d+1$ (whenever $\varphi^n$ is the identity modulo some prime number). As already mentioned, it is proved in \cite{AZ} that  $n_0$ may be chosen to depend only on $d$.
 
 The key step in the proof of Theorem \ref{pow} 
 is the following result.
  
  \begin{prop} \label{key}
  If $\varphi$ has infinite order and $v $ is $\varphi$-cyclic, then the index $\delta_n$ of the subgroup  of $\Z^d$ generated by the $\varphi^n$-orbit of $v$ goes to infinity with $n$.
  \end{prop}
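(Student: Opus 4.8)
The plan is to express $\delta_n$ as the absolute value of a linear recurrence sequence in $n$, use the Skolem--Mahler--Lech theorem to reduce the statement ``$\delta_n\not\to\infty$'' to the situation where $\delta_n$ is constant along an arithmetic progression, and then contradict that using the infinite order of $\varphi$. Concretely, I would work in the basis $\{v,\varphi(v),\dots,\varphi^{d-1}(v)\}$ of $\Z^d$, in which $\varphi$ has matrix $M_\phi$; then $\delta_n=|\det B_n|$, where the $j$-th column ($0\le j\le d-1$) of the $d\times d$ matrix $B_n$ is the coordinate vector of $\varphi^{jn}(v)$, equivalently the vector of coefficients of $X^{jn}$ modulo the characteristic polynomial $\chi$, and $\delta_n=\infty$ exactly when $\det B_n=0$. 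For fixed $j$, the sequence $n\mapsto(X^j)^n$ in $\Z[X]/(\chi)$ satisfies a linear recurrence of length $d$ (Cayley--Hamilton applied to multiplication by $X^j$), so every entry of $B_n$ is a linear recurrence sequence in $n$, and hence so is $f(n):=\det B_n=\pm\delta_n$.

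Now assume $\delta_n\not\to\infty$, so $\delta_n\le C$ for some constant $C$ and infinitely many $n$. For those $n$ one has $f(n)\in\{\pm1,\dots,\pm C\}$, so by the pigeonhole principle $f(n)=c_0$ for a fixed $c_0\ne0$ and all $n$ in an infinite set. Applying Skolem--Mahler--Lech to the linear recurrence sequence $f(n)-c_0$, the set $\{n:f(n)=c_0\}$ contains an infinite arithmetic progression $n=a+bk$ ($k\ge0$). Thus $\delta_{a+bk}=|c_0|=:\delta$ for all $k\ge0$, and it remains to contradict this.

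For the contradiction I would distinguish two cases. If every eigenvalue of $\varphi$ lies on the unit circle, Kronecker's theorem forces them all to be roots of unity; since $\varphi$ has infinite order it is not semisimple, so $\chi$ has a repeated root, and a direct computation with the Jordan form (a confluent Vandermonde determinant) shows that $\delta_n$ grows at least like a positive power of $n$ where it is finite, and equals $\infty$ otherwise---either way contradicting $\delta_{a+bk}=\delta$. Otherwise some $|\lambda_i|\ne1$, and since $\prod_i|\lambda_i|=|\det\varphi|=1$ we may assume $|\lambda_1|>1$. Here I would use $\delta_n=\prod_{i<j}|\lambda_j^n-\lambda_i^n|/|\lambda_j-\lambda_i|$ (valid when $\chi$ is squarefree, with a confluent analogue otherwise that only helps), take logarithms along $n=a+bk$, and average over $k=0,\dots,K-1$. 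By Weyl equidistribution and Jensen's formula, the limiting average of each $\log|\lambda_j^{a+bk}-\lambda_i^{a+bk}|$ is $\tfrac{b(K-1)}2\log\max(|\lambda_i|,|\lambda_j|)+O(1)$, so the average of $\log\delta_{a+bk}$ is $\tfrac{b(K-1)}2\,c+O(1)$, where $c=\sum_i(d-i)\log|\lambda_i|$ (indices sorted so that $|\lambda_1|\ge\dots\ge|\lambda_d|$). By Chebyshev's sum inequality, and since $\sum_i\log|\lambda_i|=0$ with the $|\lambda_i|$ not all equal, $c>0$; hence the average of $\log\delta_{a+bk}$ tends to infinity, contradicting that $\delta_{a+bk}$ is the constant $\delta$.

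The step I expect to be the main obstacle is precisely this last estimate when several eigenvalues share a common modulus (for instance complex-conjugate pairs lying on one circle): then the factors $|\lambda_j^n-\lambda_i^n|$ oscillate and can become arbitrarily small along the progression, so no term-by-term lower bound works; it is the averaging---Jensen's formula together with equidistribution of $k\mapsto z^k$---that extracts the genuine exponential growth rate $c>0$. One must also check the routine point that $\delta_{a+bk}<\infty$ forces $\varphi^{a+bk}$ to have $d$ distinct eigenvalues (when $\chi$ is squarefree), so that the product formula indeed applies along the progression, and that no term $\log|\lambda_j^{a+bk}-\lambda_i^{a+bk}|$ is actually $-\infty$.
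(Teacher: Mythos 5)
Your setup coincides with the paper's: work in the basis $\{v,\varphi(v),\dots,\varphi^{d-1}(v)\}$, observe that $\pm\delta_n$ is the determinant of a matrix of linear recurrence sequences and hence itself a linear recurrence sequence, apply Skolem--Mahler--Lech to put the hypothetical bad set inside an arithmetic progression on which the determinant is a nonzero constant, and then analyze the (generalized) Vandermonde factorization $\prod_{i<j}(\lambda_j^n-\lambda_i^n)$, with the root-of-unity/repeated-root case handled by the polynomial factor $n^b$. The positivity of the exponent $c=\sum_i(d-i)\log|\lambda_i|$ is exactly the paper's $K>1$. The divergence, and the gap, is in how you derive the contradiction when several eigenvalues share a modulus. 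You claim that Weyl equidistribution plus Jensen's formula shows $\frac1K\sum_{k<K}\log|1-z^{n_k}|=O(1)$ for $z=\lambda_i/\lambda_j$ on the unit circle. But equidistribution only controls Birkhoff averages of \emph{bounded} (Riemann-integrable) test functions, and $t\mapsto\log|1-e^{2\pi i t}|$ is unbounded below. For a general unimodular $z$ with $\arg z/2\pi$ irrational but Liouville-like, $|1-z^{n}|$ can be super-exponentially small infinitely often and the averages of $\log|1-z^{n_k}|$ genuinely tend to $-\infty$; so ``averaging extracts the growth rate'' is not true on the strength of equidistribution and Jensen alone. To close the gap you must use that $z$ is algebraic: $\lambda_j^n-\lambda_i^n$ is a nonzero algebraic integer whose conjugates are bounded by $2R^n$ with $R=\max_k|\lambda_k|$, so its norm being a nonzero rational integer gives $|\lambda_j^n-\lambda_i^n|\ge (2R)^{-Dn}$ (or invoke Baker). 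Combining this Liouville bound (each bad term costs $O(n)$) with equidistribution (only a proportion $O(\eps)$ of the $n_k\le a+bK$ have $|1-z^{n_k}|<\eps$, at least when $z^b$ is not a root of unity) makes the total negative contribution $o(K^2)$, which is then dominated by the main term $\tfrac{bc}{2}K^2$. You also need to dispose separately of the case where $z^b$ \emph{is} a root of unity, where there is no equidistribution at all: there $|1-z^{n_k}|$ takes finitely many values, each either zero (forcing $\delta_{n_k}=\infty$, a contradiction) or bounded below.

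It is worth contrasting this with the paper's argument, which avoids all the analysis. Since $D_n=\prod_{k<m}(\lambda_m^n-\lambda_k^n)$ is a generalized power sum equal to a nonzero constant on the arithmetic progression, linear independence of characters forces the part of the expansion of maximal modulus $K^n$ (with $K>1$) to vanish identically on the progression. That dominant part factors as $\prod_{k<m}E_{k,m}$ with $E_{k,m}=\lambda_m^n$ or $\lambda_m^n-\lambda_k^n$, so its vanishing forces $\lambda_k^n=\lambda_m^n$ for some $k\ne m$, whence $D_n=0$ --- contradicting $D_n=c'\ne0$. This purely algebraic cancellation step is what your averaging is trying to replace; your route can be made to work, but only after importing the Diophantine lower bound above, which your write-up does not supply.
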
 
  
   \begin{proof}[Proof of the theorem from the proposition]
   As above, if $G_n$ has rank 2 for some $n$, there exists a $\varphi $-cyclic element $v$. 
  For $n$ large one has $\delta_n>1$, so $G_n$ has rank $>2$. Assertion 1 is proved.
   
 For Assertion 2, suppose that there are arbitrarily large values of $n$ such that $G_n$ contains a 2-generated subgroup $H_n$ of index $\le C$, for some fixed $C$. This subgroup has a generating pair of the form $(a_n,t_n)$ with $a_n \in\Z^d$, and the  intersection of $H_n$ with $\Z^d$ is generated by the $\varphi^{nm_n}$-orbit of $a_n$ for some $m_n\ge1$. It has index $\le C$ in $\Z^d$.

 The subgroup  of $\Z^d$ generated by the $\varphi$-orbit of $a_n$  has index $\le C$, so we can assume that it does not depend on $n$. Call it $J$. It is $\varphi$-invariant so we can apply the proposition to the action of $\varphi$ on $J$, with $v=a_n$. This gives the required contradiction.
 \end{proof}

    \begin{proof}  [Proof of Proposition \ref{key}]  When $d=2$, one  easily   checks that $  c_n  $, as computed above,  goes to infinity with $n$. The proof in the general case is more involved. 
    
    Define numbers $u_k(i)$, 
    for $k=0,\dots,d-1$ and $i\ge0$,   by $\varphi^i(v)=\sum_{k=0}^{d-1}u_k(i)\varphi^k(v)$. The sequences $u_0,\dots, u_{d-1}$ 
     form a basis for the 
    space $\cals$ of sequences satisfying the linear recurrence associated to the characteristic polynomial of $\varphi$ (the recurrence is $\sum_{j=0}^d a_ju_k(i+j)=0$ if the characteristic polynomial is $\sum _{j=0}^d  a_jX^j$). 
    
    The index $\delta_n$  is the absolute value of the determinant $c_n$ of the matrix $(u_k(ni))_{0\le i,k\le d-1}$ (it is infinite if the determinant is $0$). We have to prove that, given $c\ne0$, the set of $n$'s such that $c_n=c
   $ is finite. We assume it is not and we work towards a contradiction. 
   
   A sequence satisfies a linear recurrence if and only if it is a finite sum of polynomials times exponentials, so     $c_n$ also is a recurrent sequence. The Skolem-Mahler-Lech theorem \cite{rs} then implies that $c_n=c
   $ for all $n$ in an  arithmetic progression $\N_0\inc \N$.

    We shall now replace the basis $u_k$ of $\cals$ by another basis $w_k$ depending on the eigenvalues of $\varphi$. We  then assume  that    $D_n:=\det (w_k(ni))_{0\le i,k\le d-1}=c'\ne0$ for $n\in\N_0$. 

We order  the eigenvalues $\lambda_k$ of $\varphi$  so that $0<| \lambda_1 |\le | \lambda_2 |\le\dots\le | \lambda_{d} | $.
First suppose that the eigenvalues  are all distinct. We then choose $w_k(i)= (\lambda_{k+1})^i$. In this case $D_n$ is a Vandermonde determinant, for instance $$D_n=
\left |   \begin{matrix} 
1 & 1&1   \cr
(\lambda_1)^{n}& (\lambda_2)^{n} & (\lambda_3)^{n}  \cr  
(\lambda_1)^{2n} &(\lambda_2)^{2n} &(\lambda_3)^{2n}  
   \end{matrix}\right | $$ for $d=3$, 
   so $\displaystyle D_n=\prod_{1\le k< m\le d}\bigl((\lambda_m)^{n} -(\lambda_k)^{n} \bigr)$.
   
If all moduli $ | \lambda_k | $ are distinct, 
then $ | D_n | $ goes to infinity with $n$ because  its diagonal  term $$ (\lambda_2)^{n}      (\lambda_3)^{2n}  \dots(\lambda_{d})^{(d-1)n}=\biggl( \lambda_2      (\lambda_3)^{2}  \dots(\lambda_{d})^{(d-1)}\biggr)^n$$ has modulus bigger than all others.  

   If the $\lambda_k$'s are distinct but their moduli are not, 
    expand      $D_n$  as   a sum $\sum_j \varepsilon_j\mu_j{}^ n$ (with $\varepsilon_j=\pm1$). 
  Now  
 there may be several (possibly cancelling) terms for which $ | \mu_j | $ takes its maximal value $K= |  \lambda_2      (\lambda_3)^{2}  \dots(\lambda_{d})^{(d-1)} | $. Note that $K>1$ because otherwise all $\lambda_k$'s have modulus 1, hence are roots of unity by a classical result, and $\varphi$   has  finite order. 
   
 Since $D_n=c'$ for $n\in \N_0$ and $K>1$, one has    $\sum_{| \mu_j | =K} \varepsilon_j\mu_j{}^ n=0$ for $n\in\N_0$. Call this sum $D_{n,K}$. Recall that $\displaystyle D_n=\prod_{1\le k< m\le d}\bigl((\lambda_m)^{n} -(\lambda_k)^{n} \bigr)$. To expand  this product, one  chooses one of   $(\lambda_m)^{n}$ or 
$(\lambda_k)^{n} $ for each couple  $k,m$. The corresponding term contributes to $D_{n,K}$ if and only if one always chooses a term of maximal modulus. In other words, $\displaystyle D_{n,K}=\prod_{1\le k< m\le p} E_{k,m}$ with $E_{k,m}=(\lambda_m)^{n} -(\lambda_k)^{n}$ if $ | \lambda_m | = | \lambda_k | $ and
$E_{k,m}=(\lambda_m)^{n} $ if $ | \lambda_m | > | \lambda_k | $. Since the $\lambda_k$'s are non-zero, $D_{n,K}=0$ implies 
 $(\lambda_k)^n=(\lambda_m)^n$ for some $k,m$ with $k\ne m$, so that $D_n=0$, a contradiction.

    This completes the proof when the eigenvalues of $\varphi$ are distinct. In the remaining case, the basis $w_k$ must have a different form: if $\lambda$ is an eigenvalue of multiplicity $r$, we use the sequences $\lambda^i, i\lambda^i, \dots, i^{r-1}\lambda^i$. For instance,
    $$D_n=
\left |   \begin{matrix} 
1 & 0&0&1   \cr
(\lambda_1)^{n}& n(\lambda_1)^{n} & n^2(\lambda_1)^{n} & (\lambda_4)^{n}  \cr  
(\lambda_1)^{2n} &2n(\lambda_1)^{2n} &(2n)^2(\lambda_1)^{2n} &(\lambda_4)^{2n} \cr
(\lambda_1)^{3n} &3n(\lambda_1)^{3n} &(3n)^2(\lambda_1)^{3n} &(\lambda_4)^{3n}  
 
   \end{matrix}\right | $$
   when $d=4$ and $\lambda_1=\lambda_2= \lambda_3\ne \lambda_4$.
    
    Calling $\nu_1,\dots,\nu_q$ the distinct eigenvalues of $\varphi$, there exist   integers $a,b,c_k,d_{mk}$ (depending only on the multiplicities of the eigenvalues) such that $$D_n=an^b\prod _{k=1}^q(\nu_k)^{nc_k}\prod _{1\le k< m\le q}\bigl((\nu_m)^{n} -(\nu_k)^{n}\bigr)^{d_{mk}}$$ (see \cite{fh} or Theorem 21 in \cite{kr}). For instance, $D_n$ as displayed above equals $2n^3(\lambda_1)^{3n}((\lambda_4)^{n} -(\lambda_1)^{n})^3$.
    
    If $K>1$, we conclude as in the previous case. 
    If $K=1$,
  all eigenvalues are roots of unity and $D_n=n^bE_n$ where $E_n$ only takes finitely many values and $b>0$ (an eigenvalue $\nu_j$ of multiplicity $r\ge 2$ contributes   $1+\dots+(r-1)$ to $b$).  
Such a product cannot take a non-zero value infinitely often.
     \end{proof}

     \begin{cor} If $A$ is abelian, and $\varphi\in Aut(A)$ has infinite order, then 
     $G_n=A\rtimes_{\varphi^n}\Z$ has rank $\ge3$ for $n$ large. The minimum index of 2-generated subgroups of $G_n$ goes to infinity with $n$.
     \end{cor}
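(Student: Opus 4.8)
The plan is to reduce the statement about a general finitely generated abelian group $A$ to the case $A=\Z^d$, which was already handled in Theorem~\ref{pow} together with Proposition~\ref{key}. First I would recall the structure of $A$: there is an exact sequence $0\to T\to A\to\Z^d\to0$ with $T$ the (finite) torsion subgroup of $A$. Since $T$ is characteristic, $\varphi$ induces an automorphism $\bar\varphi$ of $\Z^d$, and the natural epimorphism $Aut(A)\to Aut(\Z^d)$ has finite kernel. The key elementary observation is that $\bar\varphi$ still has infinite order: if some positive power $\bar\varphi^m$ were the identity on $\Z^d$, then $\varphi^m$ would lie in the finite kernel of $Aut(A)\to Aut(\Z^d)$, so a further power would be trivial, contradicting that $\varphi$ has infinite order.

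Next I would pass from $G_n=A\rtimes_{\varphi^n}\Z$ to its quotient $\bar G_n=\Z^d\rtimes_{\bar\varphi^n}\Z$, using the exact sequence $1\to T\to G_n\to\bar G_n\to1$ coming from the fact that $T$ is characteristic in $A$ hence normal in $G_n$. A surjection $G_n\to\bar G_n$ with finite kernel cannot increase rank by more than a bounded amount, but in fact the cleaner route is: $rk(G_n)\ge rk(\bar G_n)$ is false in general for quotients, so instead I would argue at the level of subgroups. If $H\le G_n$ has finite index and is generated by $2$ elements, its image $\bar H\le\bar G_n$ has finite index and is also $2$-generated; moreover the index of $\bar H$ in $\bar G_n$ is at most the index of $H$ in $G_n$. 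So the minimum index of a $2$-generated finite-index subgroup of $G_n$ is at least the corresponding minimum for $\bar G_n$. By Theorem~\ref{pow}(2) applied to $\bar\varphi$ (which has infinite order and acts on $\Z^d$ with $d\ge1$; note that if $d=0$ then $A$ is finite and $\varphi$ cannot have infinite order, while if $d=1$ then $GL(1,\Z)=\{\pm1\}$ has no elements of infinite order, so automatically $d\ge2$), that minimum index for $\bar G_n$ goes to infinity with $n$. Hence so does the minimum index of $2$-generated subgroups of $G_n$; in particular $G_n$ itself has no $2$-generated finite-index subgroup for $n$ large, so $rk(G_n)\ge vrk(G_n)\ge3$ for $n$ large.

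I expect the only genuine subtlety — the ``main obstacle'' — to be the bookkeeping in the claim that passing to the quotient $\bar G_n$ does not lose control of the index: one must check that a $2$-generated finite-index $H\le G_n$ does map onto a $2$-generated finite-index $\bar H\le\bar G_n$ of no larger index, which is immediate from $[\bar G_n:\bar H]=[G_n:HT']\le[G_n:H]$ where $T'$ is the image of $T$, combined with the fact that the image of a $2$-generated group is $2$-generated. Everything else (the infinite-order transfer, the triviality of the $d\le1$ cases) is a short verification. One could alternatively state the corollary by invoking the last paragraph of the proof of Proposition~\ref{compu}, which already records the nilpotent-to-abelian and torsion-reduction techniques, but for an abelian $A$ the argument above is self-contained and direct.
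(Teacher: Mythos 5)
Your proof is correct in substance and follows exactly the route the paper intends: the paper's entire proof is the one-line remark that the corollary ``follows readily from Theorem \ref{pow}, writing $A/T\simeq\Z^d$ with $T$ finite'', and you have supplied precisely the missing details (finiteness of the kernel of $Aut(A)\to Aut(A/T)$, hence infinite order of $\bar\varphi$; the index comparison $[\bar G_n:\bar H]\le [G_n:H]$ for images of $2$-generated finite-index subgroups; the vacuity of the cases $d\le1$). One slip in your final sentence: from ``the minimum index of $2$-generated subgroups of $G_n$ tends to infinity'' you cannot conclude that $G_n$ has \emph{no} $2$-generated finite-index subgroup for large $n$, nor that $vrk(G_n)\ge3$ --- indeed $vrk(G_n)=2$ whenever the rational canonical form of $\varphi^n$ has a single block, e.g.\ for any hyperbolic $\varphi\in GL(2,\Z)$. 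What you need, and what does follow, is only that for $n$ large no $2$-generated subgroup has index $1$, i.e.\ $rk(G_n)\ge3$; this overreach does not affect the validity of your proof of the corollary.
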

     
     This follows readily from Theorem \ref{pow}, writing $A/T\sim\Z^d$ with $T$ finite.
 The analogous result for nilpotent groups is false, as the following example shows.      
 Let $A$ be the Heisenberg group as in Remark \ref{heis}. If $\varphi$ maps $a$ to $bc$, $b$ to $ac^2$, and $c$ to $c\m$, then $\varphi^{2n+1}(a)=bc^{1-n}$, so  $G_{2n+1}$ has rank 2 since $a$ and $\varphi^{2n+1}(a)$ generate $A$. The automorphism induced by $\varphi$ on the abelianization  of $A$ has order 2.

\bigskip
\small
\begin{flushleft}
 
  Gilbert Levitt\\
  Laboratoire de Math\'ematiques Nicolas Oresme\\
  Universit\'e de Caen et CNRS (UMR 6139)\\
  BP 5186\\
  F-14032 Caen Cedex\\
  France\\
  \emph{e-mail: }\texttt{levitt@math.unicaen.fr}\\[5mm]

  Vassilis Metaftsis\\
University of the Aegean\\
Department of Mathematics\\
832 00 Karlovassi\\
Samos, Greece\\
  \emph{e-mail: }\texttt{vmet@aegean.gr}

\end{flushleft}

\end{document}